\def\CH{{\mathcal H}}
\def\C{\mathbb{C}}
\def\N{\mathbb{N}}
\def\R{\mathbb{R}}
\def\CF{\mathcal{F}}
 \newtheorem{thm}{ \bf Theorem}[section]
 \newtheorem{lem}[thm]{ \bf Lemma}
 \newtheorem{prop}[thm]{ \bf Proposition}
\newtheorem{exam}{Example}[section]
\newcommand{\be}{\begin{equation}}
\newcommand{\ee}{\end{equation}}
\newcommand{\bea}{\begin{eqnarray}}
\newcommand{\eea}{\end{eqnarray}}
\newcommand{\Bea}{\begin{eqnarray*}}
\newcommand{\Eea}{\end{eqnarray*}}
\newcounter{cnt1}
\newcounter{cnt2}
\newcounter{cnt3}
\newcommand{\blr}{\begin{list}{$($\roman{cnt1}$)$}
 {\usecounter{cnt1} \setlength{\topsep}{0pt}
 \setlength{\itemsep}{0pt}}}
\newcommand{\bla}{\begin{list}{$($\alph{cnt2}$)$}
 {\usecounter{cnt2} \setlength{\topsep}{0pt}
 \setlength{\itemsep}{0pt}}}
\newcommand{\bln}{\begin{list}{$($\arabic{cnt3}$)$}
 {\usecounter{cnt3} \setlength{\topsep}{0pt}
 \setlength{\itemsep}{0pt}}}
\newcommand{\el}{\end{list}}
\date{}
\begin{document}

\title[ Hardy class and Hermite expansions ]
{ On the Hermite expansions of functions\\
from Hardy class
\vskip 1.5em { \tt by} }
\author[Garg and Thangavelu]{ Rahul Garg and Sundaram Thangavelu}
\address{Department of Mathematics\\
Indian Institute of Science\\
Bangalore 560 012, India. {\it e-mail~:} {\tt
rahulgarg@math.iisc.ernet.in, veluma@math.iisc.ernet.in}}

\keywords{ Bargmann transform, Hermite functions, Fourier-Wigner transform,
Laguerre functions }
\subjclass{Primary: 42C15 ; Secondary: 42B35, 42C10, 42A56}
\thanks{The second author is supported in part by J. C. Bose Fellowship from
the Department of Science and Technology}

\begin{abstract}
Considering functions $ f $ on $ \R^n $ for which both $ f $ and $ \hat{f} $
are bounded by the Gaussian $ e^{-\frac{1}{2}a|x|^2} , 0 < a < 1 $
we show that their Fourier-Hermite coefficients have exponential decay.
Optimal decay is obtained  for $ O(n)-$finite functions thus extending the
one dimensional result of Vemuri \cite{V}.
\end{abstract}

\maketitle

\section{\bf Introduction }
\setcounter{equation}{0}

Consider the normalised Hermite functions $ \Phi_\alpha, \alpha
\in \N^n $ on $ \R^n $ which  are eigenfunctions of the Hermite
operator $ H = -\Delta+|x|^2 $ with eigenvalues $ (2|\alpha|+n).$
They form an orthonormal basis for $ L^2(\R^n) $ so that every $ f
\in L^2(\R^n) $ has the expansion
$$  f = \sum_{\alpha \in \N^n} (f,\Phi_\alpha) \Phi_\alpha~ .$$
When the Hermite coefficients of $ f $ has exponential decay, say
$|(f,\Phi_\alpha)| \leq C e^{-(2|\alpha|+n)t}$, for some $t > 0 $,
then by Mehler's formula (see \cite{Th1}) it can be easily shown
that $ f $ satisfies the estimate
$$ |f(x)| \leq C e^{-\frac{1}{2}\tanh(t)|x|^2} .$$ As  $ \Phi_\alpha $ are
also eigenfunctions of the Fourier transform with eigenvalues $
(-i)^{|\alpha|} $, it follows that $|(\hat{f},\Phi_\alpha)| \leq C
e^{-(2|\alpha|+n)t} $ and hence $ \hat{f} $ also satisfies the
same estimate as $ f.$

However, it is possible to prove better estimates for $ f $ and $
\hat{f}.$ The assumption on $ (f,\Phi_\alpha) $ together with the
asymptotic properties of holomorphically extended Hermite
functions  lead us to the fact that $ f $ extends to $ \C^n $ as
an entire function and satisfies
$$ |f(x+iy)| \leq C_m(1+|x|^2+|y|^2)^{-m}
e^{-\frac{1}{2}\tanh(2s)|x|^2+ \frac{1}{2}\coth(2s)|y|^2} $$ for every $ m \in \N $ and $ 0 < s <t.$
And a similar estimate holds for $ \hat{f} $ as well. Indeed, under
the assumption on $ (f,\Phi_\alpha) $ the entire function $ f(z) $ belongs
to the Hermite
Bergman space $ \CH_s(\C^n) $ consisting of entire functions which are square
integrable with respect to the weight function
$$ U_s(x,y) = e^{\tanh(2s)|x|^2-\coth(2s)|y|^2} $$
for every $ s < t $ and hence as shown in \cite{RT} the functions
$ f(z) $ and $\hat{f}(z) $ both satisfy the above estimate.

Suppose we only know that $ f $  and $ \hat{f} $ are bounded on $ \R^n $ by
the Gaussian $ e^{-\frac{1}{2}\tanh(2t)|x|^2}.$ We would like to know if
these conditions in turn imply some exponential decay of the Hermite
coefficients of $ f .$ It will be so if we can prove that $ f(z) $ satisfies
$$ |f(x+iy)|^2 \leq C e^{-\tanh(2s)|x|^2+\coth(2s)|y|^2} $$
for some $ s > 0.$ Under the assumption on $ f $ and $ \hat{f} $ it is clear,
from the Fourier inversion formula, that $ f $ extends to $ \C^n $ as an
entire function which satisfies
$$ |f(x+iy)| \leq C e^{\frac{1}{2}\coth(2s)|y|^2} .$$ But a priori it is not
at all clear if $ f(x+iy) $ has any decay in $ x.$ In this article we
address the problem of estimating $ f $ on $ \C^n .$

This problem has connections with a classical theorem of Hardy
\cite{H} proved in 1933 which says that a function  $ f $ and its
Fourier transform $ \hat{f} $ both cannot have arbitrary Gaussian
decay. The precise statement is as follows. For a function $ f \in
L^1(\R^n) $, let
$$ \hat{f}(\xi) = (2\pi)^{-n/2} \int_{\R^n} f(x) e^{-i x\cdot \xi} dx $$
be its Fourier transform. Suppose
$$ |f(x)| \leq C e^{-a|x|^2},~~~~ |\hat{f}(\xi)| \leq C e^{-b |\xi|^2} $$
for some positive constants $ a $ and $ b.$ Then $ f = 0 $ when $
ab >1/4 $ and $ f(x) = C e^{-a|x|^2} $ when $ ab = 1/4.$ Moreover,
there are infinitely many linearly independent functions
satisfying both conditions when $ ab < 1/4.$ Examples of such
functions are provided by  the Hermite functions $ \Phi_\alpha.$

Hardy's theorem has received considerable attention over the last fifteen
years or so as can be seen from the large number of papers written on the
theorem, see e.g. the monograph \cite{Th3} and the references therein. However,
all the works so far have treated only the case $ ab \geq 1/4 $ in various
set-ups. The case $ ab < 1/4 $ did not receive any closer study until
recently where in \cite{V} Vemuri has looked
at functions satisfying Hardy conditions with $ a = b < 1/2 .$ By a very
clever use of Bargmann transform he has proved the following characterisation
of such functions.

\begin{thm} Suppose $ f \in L^1(\R) $ satisfies the conditions
$$ |f(x)| \leq C e^{-\frac{1}{2}a x^2},~~~ |\hat{f}(\xi)| \leq C
e^{-\frac{1}{2} a \xi^2} $$ for some $ 0<a<1 $. Then the
Fourier-Hermite coefficients of $ f $ satisfy $ |(f,\Phi_k)| \leq
C (2k+1)^{-\frac{1}{4}}e^{-(2k+1)t/2}$, where t is determined by
the condition $ a = \tanh(2t).$
\end{thm}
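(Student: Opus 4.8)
The plan is to pass to the Bargmann transform $B$, which carries $L^2(\R)$ unitarily onto the Fock space $\CF$ of entire functions and sends the Hermite basis to monomials, $B\Phi_k(w)=w^k/\sqrt{k!}$. Writing $F=Bf$, the function $F$ is entire and its Taylor coefficients are exactly $c_k=(f,\Phi_k)/\sqrt{k!}$, so the assertion is equivalent to the coefficient estimate $|c_k|\le C(2k+1)^{-1/4}(k!)^{-1/2}e^{-(2k+1)t/2}$, which after Stirling's formula is a statement about the growth of $F$. The entire problem is therefore to read off sharp growth of $F$ from the two Gaussian hypotheses.

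First I would turn the hypotheses into pointwise bounds on $F$. Inserting $|f(x)|\le Ce^{-\frac12 ax^2}$ into the integral defining $Bf$ and completing the square in $x$ gives, for $w=\xi+i\eta$, a bound of the form $|F(w)|\le C\exp\big(\tfrac12\tfrac{1-a}{1+a}\xi^2+\tfrac12\eta^2\big)$. Using $a=\tanh(2t)$ one checks $\frac{1-a}{1+a}=e^{-4t}$, so this reads $|F(w)|\le C\exp(\tfrac12 e^{-4t}\xi^2+\tfrac12\eta^2)$. Since the $\Phi_k$ are eigenfunctions of the Fourier transform one has $B\hat f(w)=F(-iw)$, and running the hypothesis on $\hat f$ through the same computation yields the companion bound $|F(w)|\le C\exp(\tfrac12\xi^2+\tfrac12 e^{-4t}\eta^2)$, with the roles of $\xi$ and $\eta$ interchanged.

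The main obstacle is that these two bounds cannot simply be combined by interpolation. Their geometric mean gives only the radial bound $|F(w)|\le C\exp(\tfrac14(1+e^{-4t})|w|^2)$, and feeding this into Cauchy's inequality produces the exponential rate $(\tfrac12(1+e^{-4t}))^{1/2}$ rather than the optimal $e^{-t}$, since $\tfrac12(1+e^{-4t})>e^{-2t}$. The two bounds are sharp only along the real and imaginary axes respectively and are badly lossy in the diagonal directions; the point is to use the holomorphy of $F$ to improve the off-axis control. I would therefore try to establish, by a Phragm\'en--Lindel\"of argument in each quadrant with the axis bounds as boundary data (equivalently, by showing that $f$ lies in a Hermite--Bergman space $\CH_s$ of the type appearing in the introduction), the sharpened anisotropic estimate $|F(w)|\le C\exp(\tfrac12 p\,\xi^2+\tfrac12 e^{-2t}\eta^2)$ with $p<e^{-2t}$, whose largest exponential coefficient is exactly $e^{-2t}$. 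This Gaussian is of order two and of critical type $\tfrac12$ along the axes, so it sits precisely where the classical Phragm\'en--Lindel\"of principle is delicate (it fails outright for order-two growth in a quadrant). Controlling it, presumably by bringing in the extra information that $F\in\CF$, is where the real work lies and is the step I expect to be hardest.

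With the sharpened bound in hand the endgame is routine. I would estimate $c_k=\frac{1}{2\pi R^k}\int_0^{2\pi}F(Re^{i\theta})e^{-ik\theta}\,d\theta$ by inserting the anisotropic bound; the resulting angular integral $\int_0^{2\pi}\exp\big(\tfrac12 R^2(p\cos^2\theta+e^{-2t}\sin^2\theta)\big)\,d\theta$ is a modified Bessel integral whose large-argument asymptotics $I_0(z)\sim e^{z}/\sqrt{2\pi z}$ furnish an extra factor $R^{-1}$. Optimising the radius (the saddle sits at $R^2\sim k\,e^{2t}$) and then converting the $\sqrt{k!}$ in $(f,\Phi_k)=\sqrt{k!}\,c_k$ by Stirling's formula, the coefficient $e^{-2t}$ becomes the rate $e^{-(2k+1)t/2}$, while the Bessel factor $R^{-1}$ combined with Stirling's $k^{1/4}$ produces the sharp power $(2k+1)^{-1/4}$, giving the stated estimate.
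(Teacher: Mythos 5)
Your first half coincides with the paper's Section~3 (which itself is a recollection of Vemuri's argument): the Bargmann transform, the two dual anisotropic Gaussian bounds obtained from the hypotheses on $f$ and $\hat f$ via $B\hat f(w)=\sqrt{2\pi}\,Bf(-iw)$, the observation that naive interpolation of the two bounds loses the exponential constant, and a Phragm\'en--Lindel\"of improvement followed by Cauchy estimates. The paper carries this out and stops at the radial bound $|Bf(w)|\le Ce^{\frac{\sqrt{\mu}}{4}r^2}$, $\mu=\frac{1-a}{1+a}=e^{-4t}$ (your normalization doubles the $\frac14$ to $\frac12$), which after optimizing in $r$ gives only the \emph{weaker} polynomial factor $(2k+1)^{+1/4}$, deferring the sharp $(2k+1)^{-1/4}$ to \cite{V}. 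Your endgame idea --- that the angular Cauchy integral should gain an extra factor $R^{-1}$ by Laplace/Bessel asymptotics around a nondegenerate angular maximum, which against Stirling's $k^{1/4}$ produces $k^{-1/4}$ --- is the correct mechanism for the sharp factor, and your arithmetic for it ($R^2\sim ke^{2t}$, $k^{1/4}R^{-1}\sim k^{-1/4}$) is right.

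However, the key lemma you propose to feed into this mechanism is false. Take the chirped Gaussian $f(x)=e^{-\frac12(a+ib)x^2}$ with $b=\sqrt{1-a^2}$: then $|f(x)|=e^{-\frac{a}{2}x^2}$ and $|\hat f(\xi)|=Ce^{-\frac12\frac{a}{a^2+b^2}\xi^2}=Ce^{-\frac{a}{2}\xi^2}$, so $f$ lies in the Hardy class. Its Bargmann transform is again a Gaussian, $Bf(w)=c\,e^{\lambda w^2}$ with $\lambda=\frac{1-a-ib}{2(1+a+ib)}$ in your normalization, and one computes $|\lambda|=\frac12\sqrt{\frac{1-a}{1+a}}=\frac12e^{-2t}$ while $\arg\lambda=-\arctan\sqrt{\tfrac{1+a}{1-a}}-\arctan\sqrt{\tfrac{1-a}{1+a}}=-\frac{\pi}{2}$. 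Hence $|Bf(re^{i\theta})|=c\,e^{\frac12 e^{-2t}r^2\sin 2\theta}$: the extremal growth $e^{\frac12 e^{-2t}r^2}$ is attained exactly on the diagonal $\theta=\pi/4$, where your proposed bound $|F(w)|\le C\exp\bigl(\tfrac12 p\,\xi^2+\tfrac12 e^{-2t}\eta^2\bigr)$ with $p<e^{-2t}$ would force the strictly smaller exponent $\tfrac14(p+e^{-2t})r^2$. So no Phragm\'en--Lindel\"of argument (nor Hermite--Bergman membership) can deliver your anisotropic estimate: the peak of $|Bf|$ on circles need not sit on the imaginary axis and generically sits on a diagonal; this is essentially the one-dimensional shadow of the paper's Example~4.4. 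Reassuringly, for this very example the angular maximum is interior and smooth, the integral $\int_0^{2\pi}e^{\frac12 e^{-2t}R^2\sin 2\theta}d\theta$ does gain $R^{-1}$, and the Hermite coefficients come out as $|(f,\Phi_{2k})|\sim k^{-1/4}e^{-2kt}$, showing the theorem is sharp --- but what must actually be proved is an upper bound of the form ``critical growth $e^{\frac12 e^{-2t}r^2}$ is approached only near finitely many directions, with quantitative decay away from them,'' uniformly over the Hardy class. That is the real content of Vemuri's argument; your proposal not only leaves this step unproved (as you acknowledge) but aims it at a statement that the above example refutes.
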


In \cite{V} the author has considered functions of one variable
only. A natural question is whether a similar result is true for
functions on $ \R^n.$  The proof in \cite{V}, like many other
proofs of Hardy-type theorems, depends on Phragmen-Lindelof
maximum principle which is essentially a theorem in one complex
variables. If we consider functions $ f $ which are tensor
products of one dimensional functions, then an analogue  of
Theorem 1.1 follows easily. More generally, the arguments in
\cite{V} can be used to prove the following result. We state the
result in terms of the Hermite projection operators $ P_k $ which
are defined by
$$ P_kf = \sum_{|\alpha|=k} (f,\Phi_\alpha) \Phi_\alpha $$ for any $f \in
L^2(\R^n).$ We refer to \cite{Th1} for more about Hermite
expansions.

\begin{thm} Suppose $ |f(x)| \leq C e^{-\frac{1}{2}a|x|^2} $ and for any
$ j = 1,2,...,n$, $ |\CF_jf(\xi)| \leq C e^{-\frac{1}{2}a|x|^2} $ where
$ {\CF_jf} $ is the partial Fourier transform of $ f $ in any set of $ j $
variables. Then we have the estimates
$ \|P_kf\|_2 \leq C (2k+n)^{\frac{n-2}{4}}e^{-(2k+n)t/2} $ where
$ a = \tanh(2t).$
\end{thm}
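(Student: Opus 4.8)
The plan is to transplant Vemuri's one–variable argument to $\C^n$ through the Bargmann transform, running its Phragm\'en--Lindel\"of step one complex variable at a time; the hypotheses on all partial Fourier transforms are exactly what is needed to feed the one–dimensional principle in each slot. For a pure tensor product $f=f_1\otimes\cdots\otimes f_n$ the result is immediate, since each $f_i$ satisfies the hypotheses of Theorem 1.1 and $(f,\Phi_\alpha)=\prod_i (f_i,\Phi_{\alpha_i})$; the real content is to reach the same factorised bound for a general $f$.

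First I would pass to the Bargmann transform $Bf(z)=\sum_\alpha (f,\Phi_\alpha)\,z^\alpha/\sqrt{\alpha!}$, an entire function on $\C^n$ whose Taylor coefficients are $(f,\Phi_\alpha)/\sqrt{\alpha!}$, and I would aim for the factorised coefficient estimate
\[ |(f,\Phi_\alpha)|\le C\prod_{j=1}^n (2\alpha_j+1)^{-1/4}\,e^{-(2\alpha_j+1)t/2}. \]
Granting this, the theorem follows by inserting it into $\|P_kf\|_2^2=\sum_{|\alpha|=k}|(f,\Phi_\alpha)|^2$ and estimating the resulting Dirichlet-type sum $\sum_{|\alpha|=k}\prod_j (2\alpha_j+1)^{-1/2}\sim C\,(2k+n)^{n/2-1}$, which produces exactly the factor $(2k+n)^{(n-2)/4}$ in the statement.

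To obtain the coefficient estimate I would first record the growth of $Bf$: a direct Gaussian integration shows that $|f(x)|\le Ce^{-\frac12 a|x|^2}$ forces $|Bf(z)|\le C\prod_j e^{\frac12(r u_j^2+v_j^2)}$, where $z_j=u_j+iv_j$ and $r=\frac{1-a}{1+a}=e^{-4t}<1$, so in every variable the real direction already carries the good constant $r$ while the imaginary direction does not. The partial Fourier hypotheses repair this: since $\CF_S$ multiplies the $\alpha$-th coefficient by $(-i)^{\sum_{j\in S}\alpha_j}$, one has $B(\CF_S f)(z)=Bf(z')$ with $z_j'=-iz_j$ for $j\in S$ and $z_j'=z_j$ otherwise, so applying the same integration to $\CF_S f$ gives, for each $S$, a majorant for $Bf$ in which $u_j$ and $v_j$ are interchanged for $j\in S$. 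Running over all $S$ yields, for each variable and each prescribed choice of good direction, a Gaussian majorant for $Bf$.

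The heart of the matter is to merge these directional majorants into genuine two-sided control of $Bf$ in each variable and then read off the coefficients. I would proceed one variable at a time, exactly as in the proof of Theorem 1.1: freezing $z_2,\dots,z_n$, I view $Bf$ as an entire function of $z_1$ of order two, fold the first quadrant onto a half-plane by $w=z_1^2$, and apply Phragm\'en--Lindel\"of using the real-direction bound (from the space-side hypothesis) on one boundary ray and the imaginary-direction bound (from $\CF_{\{1\}}$) on the other; the one-dimensional argument of \cite{V} then yields the sharp factor $(2\alpha_1+1)^{-1/4}e^{-(2\alpha_1+1)t/2}$ in the first slot. Repeating in $z_2$ with $\CF_{\{1,2\}}$ (so as not to spoil the first slot), and so on through all $n$ variables, produces the factorised estimate. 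The main obstacle is precisely this iteration: Phragm\'en--Lindel\"of is a one-variable principle whose relevant growth order here is the borderline value two, where it is delicate, and the passage to $\C^n$ must be engineered as $n$ successive one-dimensional applications with constants uniform in the frozen variables -- it is to guarantee the correct one-dimensional input at every stage of this iteration that the hypothesis must be imposed on the partial Fourier transform in every set of $j$ variables. Once the factorised coefficient estimate is in hand, the Dirichlet-sum computation of the second paragraph finishes the proof.
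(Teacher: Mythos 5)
Your proposal is correct and is essentially the paper's own (only sketched) proof: Section 3 of the paper states that Theorem 1.2 follows by running Vemuri's Bargmann-transform and Phragm\'en--Lindel\"of argument one variable at a time, with the hypotheses on all partial Fourier transforms supplying exactly the $2^n$ directional Gaussian majorants $|Bf(w)| \leq C\exp\bigl(\tfrac{1}{4}\sum_{j\notin S}(v_j^2+\mu u_j^2)+\tfrac{1}{4}\sum_{j\in S}(u_j^2+\mu v_j^2)\bigr)$ needed at each stage of the iteration, and leaves the details to the reader. Your expansion of that sketch -- the per-slot sharp coefficient bound $(2\alpha_j+1)^{-1/4}e^{-(2\alpha_j+1)t/2}$ from the one-dimensional argument of \cite{V}, followed by the Dirichlet-type count $\sum_{|\alpha|=k}\prod_j(2\alpha_j+1)^{-1/2}=O\bigl((2k+n)^{(n-2)/2}\bigr)$ -- is the intended route and yields exactly the stated exponent $(2k+n)^{\frac{n-2}{4}}e^{-(2k+n)t/2}$.
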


There are strong reasons to believe that the result is true for all functions
satisfying the Hardy conditions. However, at present we do not know how to prove
the result. Nevertheless, we have the following slightly weaker result.

\begin{thm}Suppose $ f \in L^1(\R^n) $ satisfies the estimates
$$  |f(x)| \leq C e^{-\frac{1}{2}a|x|^2},~~ |\hat{f}(x)| \leq C
e^{-\frac{1}{2}a|x|^2} $$ for some $ 0<a<1. $ Then $ \|P_kf\|_2
\leq C(2k+n)^{\frac{n-1}{4}} e^{-(2k+n)s/2} $ where $ s $ is
determined by the condition $ \tanh(2s) = a/2.$
\end{thm}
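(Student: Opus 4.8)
The plan is to realise $f$, which a priori lives only on $\R^n$, as an element of the Hermite Bergman space $\CH_s(\C^n)$ for the $s$ with $\tanh(2s)=a/2$, and then to read off the decay of $\|P_kf\|_2$ from the spectral description of $\CH_s$. First I would check that $f$ extends to an entire function on $\C^n$: since $|\hat f(\xi)|\le Ce^{-\frac12 a|\xi|^2}$, the inversion formula $f(z)=(2\pi)^{-n/2}\int_{\R^n}\hat f(\xi)e^{iz\cdot\xi}\,d\xi$ converges for every $z=x+iy\in\C^n$ and defines an entire function, and completing the square in $\xi$ gives the growth bound $|f(x+iy)|\le Ce^{\frac{1}{2a}|y|^2}$ in the imaginary directions. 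The real content of the theorem is to upgrade this to a two-sided estimate
$$ |f(x+iy)|^2\le C\, e^{-\tanh(2s)|x|^2+\coth(2s)|y|^2}, $$
for then $|f(x+iy)|^2U_s(x,y)$ is integrable (up to the polynomial room obtained by shrinking $s$ slightly), so $f\in\CH_s(\C^n)$, and the asymptotics of the $\CH_s$-norms of the holomorphically extended $\Phi_\alpha$ force exponential decay of the coefficients $(f,\Phi_\alpha)$.

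The main obstacle is exactly the one flagged in the introduction: the hypotheses give decay of $f$ in the real directions only on the axis $y=0$, and a priori $f(x+iy)$ need not decay in $x$ once $y\neq 0$. To manufacture this decay I would use the Bargmann transform $B$, which sends $f$ to the entire function $Bf(z)=\sum_{\alpha}(f,\Phi_\alpha)z^\alpha/\sqrt{\alpha!}$ and which intertwines the Fourier transform with a rotation, $B\hat f(z)=Bf(-iz)$. Writing $z=u+iv$ and inserting $|f(x)|\le Ce^{-\frac12 a|x|^2}$ into the Gaussian kernel of $B$ gives, after one Gaussian integration, a bound of the form $|Bf(z)|\le C\exp\bigl(\tfrac12|v|^2+\tfrac{1-a}{2(1+a)}|u|^2\bigr)$; applying the same estimate to $\hat f$ and using $B\hat f(z)=Bf(-iz)$ produces the companion bound with the roles of $u$ and $v$ interchanged. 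It is the presence of two such estimates, one from $f$ and one from $\hat f$, that allows one to control $Bf$ (equivalently $f$) in all directions at once: combining them bounds the growth of $Bf$ by $e^{c|z|^2}$ with $c<\tfrac12$, which is precisely the sub-Gaussian growth encoding coefficient decay. The factor $a/2$ rather than $a$ in $\tanh(2s)=a/2$ reflects that this symmetric use of $f$ and $\hat f$ recovers only half of the on-axis decay rate off the real axis; this is the point where the argument is lossy and falls short of the sharp rate of Theorem 1.2.

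With the off-axis estimate in hand, the remaining step is bookkeeping. Extracting the part of $Bf$ homogeneous of degree $k$, namely $Q_k(z)=\sum_{|\alpha|=k}(f,\Phi_\alpha)z^\alpha/\sqrt{\alpha!}$, one has $\|P_kf\|_2^2=\|Q_k\|^2$ in the Bargmann--Fock space, while the bound $|Bf(z)|\le Ce^{c|z|^2}$ together with the homogeneity $Q_k(rz)=r^kQ_k(z)$ gives, after optimising in $|z|$, a pointwise estimate on $Q_k$ on the unit sphere of size $(e/ck)^{k/2}$ up to constants. Integrating against the Gaussian weight and using $\pi^{-n}\int_{\C^n}|z|^{2k}e^{-|z|^2}\,dz=(n)_k$, then applying Stirling's formula, converts this into the stated bound $\|P_kf\|_2\le C(2k+n)^{(n-1)/4}e^{-(2k+n)s/2}$; the polynomial power $(n-1)/4$ is produced by the Pochhammer factor $(n)_k\sim k^{n-1}k!/\Gamma(n)$, that is, by the dimension count of the degree-$k$ monomials. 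Alternatively one may avoid coefficientwise estimates entirely and simply quote that membership in $\CH_s(\C^n)$ forces this decay, as in \cite{RT}.

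I expect the Bargmann-transform step, transferring decay from the real axis into genuine decay in the real directions of $\C^n$ uniformly in $n$, to be the hard part, since this is exactly where the one-variable Phragmén--Lindelöf argument of \cite{V} does not extend cleanly and where the loss from $a$ to $a/2$ is incurred; the rest is a robust, if somewhat lengthy, computation with Gaussian integrals and Stirling asymptotics.
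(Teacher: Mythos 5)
Your proposal is correct in substance, but it proves the theorem by a genuinely different route from the paper. The paper never works with the $n$-dimensional Bargmann transform of $f$ itself: it writes $\|P_kf\|_2^2=(2\pi)^{-n/2}\int_{\C^n}V(f,f)(z)\,\varphi_k^{n-1}(z)\,dz$, shows in Proposition 4.2 that the Hardy conditions force $|V(f,f)(z)|\leq C_n a^{-n/2}e^{-\frac{1}{4}a|z|^2}$ together with the same bound for $\mathfrak{F}_sV(f,f)$ --- this geometric-mean combination of the decay in $x$ (coming from $\hat f$) and in $y$ (coming from $f$) is exactly where the paper loses the factor $2$ that produces $\tanh(2s)=a/2$ --- and then integrates $V(f,f)$ over spheres to reduce matters to a one-dimensional Hardy theorem for the Hankel transform $H_{n-1}$ (Theorem 4.1), which is proved by running Vemuri's Phragm\'en--Lindel\"of argument on the Cholewinski transform $U_\delta$. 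You instead stay with the Bargmann transform in $n$ variables and, crucially, avoid Phragm\'en--Lindel\"of altogether (the one-variable nature of that principle is precisely what blocks the sharp rate in higher dimensions): you take the pointwise minimum of the two global bounds $\exp\big(\frac{1}{2}(|v|^2+\mu|u|^2)\big)$ and $\exp\big(\frac{1}{2}(|u|^2+\mu|v|^2)\big)$, whose worst case on $|z|=r$ occurs at $|u|=|v|$ and yields $|Bf(z)|\leq Ce^{c|z|^2}$ with $c=\frac{1+\mu}{4}=\frac{1}{2(1+a)}<\frac{1}{2}$; your loss is incurred at this diagonal sector rather than at a Wigner estimate. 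The bookkeeping then works as you indicate, except that the optimized one-variable Cauchy bound is $\sup_{|\zeta|=1}|Q_k(\zeta)|\leq\inf_{r>0}r^{-k}e^{cr^2}=(2ec/k)^{k/2}$, not $(e/ck)^{k/2}$ (as literally written your expression would, multiplied against $((n)_k)^{1/2}\sim (k^{n-1}k!/\Gamma(n))^{1/2}$, produce a growing bound); with the correct value, Stirling gives $\|P_kf\|_2\leq C_a\,(1+a)^{-k/2}\,k^{(2n-1)/4}$.

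The one step you leave as a heuristic (``the factor $a/2$ reflects half of the on-axis decay'') is the step that actually has to be checked, and it does check out: your per-$k$ rate is $\sigma=\frac{1}{2}\log(1+a)$, the theorem needs $s=\frac{1}{4}\log\frac{2+a}{2-a}$, and $\sigma\geq s$ is equivalent to $(1+a)^2(2-a)\geq 2+a$, i.e.\ $2a\geq a^3$, which holds strictly for $0<a<1$; the strict gap absorbs the polynomial factor $k^{(2n-1)/4}$ and yields the stated bound, in fact with a slightly better exponential rate than the paper's (still below the conjectured sharp $t$ with $\tanh(2t)=a$, since $\sigma\geq t$ would force $1-a^2\geq1$). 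Your opening plan --- establishing the two-sided estimate $|f(x+iy)|^2\leq Ce^{-\tanh(2s)|x|^2+\coth(2s)|y|^2}$ and membership in $\CH_s(\C^n)$ --- is never actually carried out, but it is also never used, since the Bargmann-coefficient argument is self-contained; I would simply delete it. As for what each approach buys: yours is shorter, dimension-free, and needs no Wigner or Hankel machinery; the paper's route localizes its loss in a single estimate (Proposition 4.2, shown by Example 4.4 to be unimprovable in general) and builds the transform $U_\delta$ and Theorem 4.1, which are then reused to obtain the sharp rate for $O(n)$-finite functions in Theorem 1.4 --- something your minimum-of-two-bounds trick cannot deliver.
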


We prove this theorem in Section 4 by relating the Hermite
projections $ P_kf $ with the Fourier-Wigner transform $ V(f,f) $
and appealing to a version of Hardy's theorem for Hankel
transform. Since the Fourier transform of a radial function
reduces to a Hankel transform, Theorem 1.1 can be shown to be true
for all radial functions. More generally, we can prove the same
for all $ O(n)-$finite functions in $ L^2(\R^n).$ In other words,
Theorem 1.1 remains true for all functions whose restrictions to
the unit sphere $ S^{n-1} $ have only finitely many terms in their
spherical harmonic expansions.

\begin{thm}Suppose $ f \in L^1(\R^n) $ satisfies the same conditions
as in the previous theorem. If we further assume that $ f $ is $ O(n)-$finite,
then $ \|P_kf\|_2  \leq C(2k+n)^{\frac{n-2}{4}}
e^{-(2k+n)t/2}$ where $ a = \tanh(2t).$
\end{thm}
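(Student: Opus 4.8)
The plan is to use the $O(n)$-finiteness to reduce the $n$-dimensional Hardy condition to a genuinely one-variable condition for the Hankel transform, where the sharp Bargmann-transform argument underlying Theorem 1.1 applies directly and produces the \emph{optimal} rate $t$, in contrast to the Fourier--Wigner route of Theorem 1.3 which loses a factor.

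First I would decompose $f$ along spherical harmonics. Being $O(n)$-finite, $f$ can be written as a finite sum
$$ f(x) = \sum_{m,j} P_{m,j}(x)\, g_{m,j}(|x|), $$
where for each relevant degree $m$ the $P_{m,j}$ run over a basis of solid harmonics of degree $m$ and only finitely many $m$ occur. Since the majorant $e^{-\frac12 a|x|^2}$ is radial, integrating the bound on $f$ against the spherical harmonics $Y_{m,j}$ over $S^{n-1}$ shows that each radial profile inherits a Gaussian bound; writing $h_{m,j}(r)=r^{m}g_{m,j}(r)$ we get $|h_{m,j}(r)| \leq C e^{-\frac12 a r^2}$. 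Because the components lie in mutually orthogonal subspaces of $L^2(\R^n)$, it suffices to prove the estimate for a single term $P(x)g(|x|)$ with $P$ a solid harmonic of degree $m$.

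Next I would invoke the Hecke--Bochner formula, which gives $\widehat{Pg} = (-i)^m P\,G$, where $G$ is obtained from the radial part of $Pg$ by the Hankel transform $H_\mu$ of order $\mu = \tfrac n2 + m - 1$ (up to a normalising constant). The hypothesis on $\hat f$, treated exactly as above, forces the same Gaussian bound on the radial profile of $\widehat{Pg}$, so that both $h=r^m g$ and its Hankel transform $H_\mu h$ are dominated by $e^{-\frac12 a r^2}$. This is precisely a one-dimensional Hardy problem for the Hankel transform. Here one uses that the Hermite functions $\Phi_\alpha$ with $|\alpha|=k$ lying in the degree-$m$ harmonic subspace are, up to normalisation, the Laguerre functions $L_p^{\mu}(r^2)\, r^m e^{-r^2/2}\, Y_{m,j}(\omega)$ with $2p+m=k$, so that the eigenvalue $2k+n$ equals $2(2p+m)+n$; consequently $\|P_k f\|_2$ is controlled, up to the fixed number of harmonics present, by the size of the Laguerre coefficient of $h$ at index $p=(k-m)/2$.

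The hard part will be extracting the sharp decay of these Laguerre coefficients with the optimal constant. The Hankel transform, like the one-dimensional Fourier transform, is amenable to the Phragm\'en--Lindel\"of method in one complex variable; running the Bargmann-transform argument of Theorem 1.1 in this radial setting yields the Hankel analogue of that theorem, giving coefficients of order $e^{-(2k+n)t/2}$ with $a=\tanh(2t)$, rather than the weaker $e^{-(2k+n)s/2}$ of Theorem 1.3. The gain is exactly the payoff of $O(n)$-finiteness: the spherical-harmonic reduction applies Hecke--Bochner directly to $f$, whereas the general argument passes through the bilinear Fourier--Wigner transform $V(f,f)$, which halves the effective Gaussian parameter and hence replaces $\tanh(2t)=a$ by $\tanh(2s)=a/2$. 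Finally, combining the coefficient decay with the normalising factors relating $L_p^{\mu}(r^2) r^m e^{-r^2/2}$ to $\Phi_\alpha$ in dimension $n$, and summing the finitely many components, produces
$$ \|P_k f\|_2 \leq C (2k+n)^{\frac{n-2}{4}} e^{-(2k+n)t/2}; $$
the exponent $\tfrac{n-2}{4}$ arises from the Laguerre normalisation and reduces, when $n=1$, to Vemuri's factor $(2k+1)^{-1/4}$.
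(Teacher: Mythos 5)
Your overall strategy---decompose along spherical harmonics, use Hecke--Bochner to turn each component into a one-dimensional Hardy problem for the Hankel transform of order $\mu=\frac n2+m-1$, and then invoke the Hankel analogue of Vemuri's theorem---is the natural first attempt, and the paper itself endorses it for the radial ($m=0$) piece. But there is a genuine quantitative gap at the step where you claim the polynomial factor $(2k+n)^{\frac{n-2}{4}}$ ``arises from the Laguerre normalisation.'' Run the numbers: the Hankel--Hardy theorem (Theorem 4.1 of the paper) gives $|(g,\psi_p^{\mu})|\leq C(4p+2\mu+1)^{\mu}e^{-2tp}$, while $\|\psi_p^{\mu}\|_{L^2(r^{2\mu+1}dr)}^2=\frac{\Gamma(p+\mu+1)}{2\Gamma(p+1)}\sim p^{\mu}$, so the \emph{normalised} Laguerre coefficient---which by the Hecke--Bochner formula (Proposition 2.1) is exactly what controls $\|P_{2p+m}(Pg)\|_2$---decays like $p^{\mu/2}e^{-2tp}$, i.e.\ like $(2k+n)^{\frac{n-2}{4}+\frac m2}e^{-(2k+n)t/2}$ for a degree-$m$ component. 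For $m\geq 1$ this exceeds the asserted bound by the unbounded factor $(2k+n)^{m/2}$, and no constant depending on the finitely many degrees present can absorb a growing power of $k$. The missing idea is that the radial profile entering the Hankel transform is $\tilde f_{mj}(r)=r^{-m}f_{mj}(r)$, which enjoys an extra $r^{-m}$ beyond the Gaussian bound; at the Laguerre concentration scale $r\sim\sqrt{4p}$ this is precisely worth the factor $p^{-m/2}$ you need. But Theorem 4.1 as stated does not see this, so your plan requires a refined Hankel--Hardy theorem whose hypothesis is $|r^m g(r)|\leq Ce^{-\frac12 ar^2}$ and whose conclusion gains $k^{-m/2}$, uniformly in $m$---and that refinement is exactly the hard content you have skipped.

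The paper avoids this trap by not working componentwise at all. It studies the $L^2(S^{n-1})$-valued Bargmann transform $Bf(z,\omega)=e^{-z^2/4}\int_{\R^n}f(x)e^{-\frac12|x|^2}e^{zx\cdot\omega}dx$ in one complex variable $z$, runs Vemuri's Phragm\'en--Lindel\"of argument on the scalar functions $\int_{S^{n-1}}Bf(z,\omega)g(\omega)d\omega$ to bound the Taylor coefficients $d_k(\omega)$ in $L^2(S^{n-1})$ (Theorem 5.2), and then identifies $\int_{S^{n-1}}|d_{2k}(\omega)|^2d\omega$ with a weighted sum of the Laguerre coefficients $(\tilde f_{2m,j},\psi_{k-m}^{\frac n2+2m-1})$ over \emph{all} components at once (Theorem 5.3). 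Note that the factor $(-iz)^m$ in the expansion $Bf(z,\omega)=\sum_{m,j}(-iz)^m(U_m\tilde f_{mj})(z)Y_{mj}(\omega)$ shifts the Taylor index by $m$, which is exactly how the $r^{-m}$ information gets encoded automatically. Comparing with the Hecke--Bochner expression for $\|P_{2k}f\|_2^2$ (Proposition 5.4), the mismatch is the ratio $c(k,m)=2^{2(k-m)}\Gamma(k-m+1)\Gamma(\frac n2+k+m)/\Gamma(2k+1)$ together with a factor $2^{2m}$; the crux of the paper's proof is the Stirling estimate $c(k,m)=O(k^{\frac{n-1}{2}})$ \emph{uniformly} in $0\leq m\leq k$ (Lemma 5.5), while the residual $2^{2m}$ is cancelled by the damping operator $Tf=\sum_m 2^{-m/2}\sum_j f_{mj}Y_{mj}$, and the $O(n)$-finiteness is used only at the very last step to compare $\|P_kf\|_2$ with $\|P_k(Tf)\|_2$. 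So your sketch identifies the right structural ingredients (Hecke--Bochner, Laguerre coefficients, the $U_\delta$ transform, where the factor-of-two loss in Theorem 1.3 comes from), but the uniform-in-$m$ Gamma-ratio analysis that actually delivers the exponent $\frac{n-2}{4}$ is absent, and without it the argument proves a strictly weaker bound for every nonradial component.
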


We prove this theorem in Section 5 by studying a vector valued
Bargmann transform. Let us define the Hardy class $ H(a), 0 < a <
1 $ as the set of all functions $ f $ satisfying the Hardy
conditions  in Theorem 1.3. We are interested in estimating the
Hermite coefficients of $ f $ from $ H(a).$ This problem has been
completely solved in the one dimensional case by Vemuri \cite{V}.
A work closely related to this article is the paper by Janssen and
Eijndhoven \cite{J} where they have studied growth of Hermite
coefficients in one dimension. Here we treat the higher
dimensional case. It would also be interesting to find the precise
relation between Hardy conditions and the membership in
Hermite-Bergman spaces $ \CH_t(\C^n)$.

\section{\bf Preliminaries }

In this section we set up the notations and collect relevant
results about Hermite functions, Fourier-Wigner and Hankel
transforms. We closely follow the notations used in \cite{Th2} and
\cite{F} and we refer to the same for the proofs and any
unexplained terminology. Writing down the Hermite expansion of $ f
\in L^2(\R^n) $ as $ f = \sum_{k=0}^\infty P_kf $, the Plancherel
theorem  reads as $ \|f\|_2^2 = \sum_{k=0}^\infty \|P_kf\|_2^2 .$

The Fourier-Wigner transform of two functions $ f, g \in L^2(\R^n) $ is a function
on $ \C^n $ defined by
$$ V(f,g)(x+iy) = (2\pi)^{-n/2}\int_{\R^n} e^{i (x\cdot \xi+\frac{1}{2}x\cdot y)}
f(\xi+y)\overline{g(\xi)}  d\xi.$$ We make use of the identity
(see \cite{Th1})
$$ \int_{\C^n} V(f_1,g_1)(z) \overline{V(f_2,g_2)(z)} dz = (f_1,f_2)(g_2,g_1).$$
for any $ f_i,g_i \in L^2(\R^n) $.

The special Hermite functions $ \Phi_{\alpha \beta} =
V(\Phi_\alpha,\Phi_\beta) $ form an orthonormal basis for $
L^2(\C^n).$ We observe that
$$ \sum_{|\alpha| = k}|(f,\Phi_\alpha)|^2 = \int_{\C^n} V(f,f)(z)
\sum_{|\alpha| = k} \overline{\Phi_{\alpha \alpha}(z)}~ dz.$$ If
we let $ \varphi^{n-1}_k(z) $ stands for the Laguerre function $
L_k^{(n-1)} (\frac{1}{2}|z|^2)e^{-\frac{1}{4}|z|^2} $ then we know
that
$$ \sum_{|\alpha| = k} \Phi_{\alpha,\alpha}(z) = (2\pi)^{-n/2}\varphi^{n-1}_k(z) $$
and therefore we get the useful relation
$$ \|P_kf\|_2^2 = (2\pi)^{-n/2} \int_{\C^n} V(f,f)(z) \varphi^{n-1}_k(z)~dz $$
which will be used in Section 4. The same idea has been used in \cite{J} in
the study of growth of Hermite coefficients.

In Section 5 we will make use of a Hecke-Bochner type formula for
the Hermite projection operators. Let $ P $ be a harmonic
polynomial which is homogeneous of degree $ m $, called a solid
harmonic. It is well known that if $ f $ is radial, then the
Fourier transform of $ fP $ is again of the same form, viz.
$\widehat{fP} = F P $ where $ F $ is given by a Hankel transform.
A similar result is true for the Hermite projections. Let $
L_k^\delta $ stand for Laguerre polynomials of type $ \delta $
which are defined by the generating function identity
$$ \sum_{k=0}^\infty L_k^\delta(x)e^{-\frac{1}{2}x}r^k = (1-r)^{-\delta-1}
e^{-\frac{1}{2}\frac{1+r}{1-r}x} $$ for $ |r| <1, x > 0.$ Define
$$ R_k^\delta(f) = 2\frac{\Gamma(k+1)}{\Gamma(k+\delta+1)}
\int_0^\infty f(s) \psi_k^\delta(s) s^{2\delta+1} ds $$ where the Laguerre
functions $ \psi_k^\delta $ are defined by
$$ \psi_k^\delta(s) = L_k^\delta(s^2)e^{-\frac{1}{2}s^2}.$$ With these notations we have

\begin{prop} Suppose $ f \in L^2(\R^n) $ be such that $ f = g P $ where $ g $
is radial and $ P $ is a solid harmonic of degree $ m.$ Then $ P_jf = 0 $
unless $ j = 2k+m $ in which case
$$ P_{2k+m}f(x) = R_k^{n/2+m-1}(g)P(x)\psi_k^{\frac{n}{2}+m-1}(|x|).$$
\end{prop}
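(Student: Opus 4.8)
The plan is to prove this Hecke--Bochner type formula for Hermite projections by reducing it to the corresponding statement for the Fourier transform and then extracting the projection operator via Mehler's formula. Let me think about how to carry this out.

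The key idea is that the Hermite projections are built out of the Hermite semigroup $e^{-tH}$, whose kernel is given explicitly by Mehler's formula. Since $f = gP$ with $g$ radial and $P$ a solid harmonic of degree $m$, I want to exploit the classical Hecke--Bochner identity: the Fourier transform of $gP$ equals $G P$ where $G$ is a Hankel transform (of order $n/2+m-1$) of $g$. The Mehler kernel for $e^{-tH}$ has the form of a Gaussian times a factor, and acting on $gP$ it should preserve the harmonic $P$ and act on the radial part $g$ by an integral operator whose kernel is a Laguerre-type generating function. This is exactly where the generating function identity for $L_k^\delta$ stated just before the proposition enters.

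First I would set up the action of $e^{-tH}$ on $gP$. Using Mehler's formula one writes $e^{-tH}f(x)$ as an integral against a Gaussian kernel $K_t(x,y)$. When $f = gP$, I would use the Hecke--Bochner machinery for the Hermite setting: because $P$ is a solid harmonic of degree $m$, the angular integration collapses and one is left with $e^{-tH}(gP)(x) = G_t(|x|)\,P(x)$, where $G_t$ is obtained from $g$ by an integral operator whose kernel, after expanding the Gaussian in the radial variables, is governed by the generating function $\sum_k L_k^\delta(x)e^{-x/2}r^k = (1-r)^{-\delta-1}e^{-\frac12\frac{1+r}{1-r}x}$ with $\delta = n/2+m-1$ and $r = e^{-2t}$. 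Matching the right-hand side of this identity to the radial part of the Mehler kernel is the computational heart of the argument.

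Next I would extract the individual projections. Since $e^{-tH} = \sum_{j} e^{-(2j+n)t} P_j$, expanding both sides in powers of $r = e^{-2t}$ and comparing coefficients isolates $P_j(gP)$. The generating function identity shows that only even shifts $j - m = 2k$ survive, giving $P_jf = 0$ unless $j = 2k+m$, and that the surviving term is a Laguerre coefficient of $g$. Carefully tracking the normalizing constants $2\Gamma(k+1)/\Gamma(k+\delta+1)$ and the factor $\psi_k^\delta$ built from $L_k^\delta(s^2)e^{-s^2/2}$, one recovers precisely
$$ P_{2k+m}f(x) = R_k^{n/2+m-1}(g)\,P(x)\,\psi_k^{\frac{n}{2}+m-1}(|x|). $$
The orthogonality relations for the Laguerre functions $\psi_k^\delta$ with respect to the measure $s^{2\delta+1}\,ds$ on $(0,\infty)$ justify that $R_k^\delta(g)$ is indeed the correct coefficient.

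The main obstacle I anticipate is the clean reduction of the multidimensional Mehler kernel to the one-dimensional radial Laguerre integral operator, i.e., verifying that integrating the Gaussian kernel against a solid harmonic $P$ over the sphere produces exactly the radial kernel matching the generating function with the shifted parameter $\delta = n/2+m-1$. This step uses the fact that $P$ is an eigenfunction of the spherical Laplacian and the Funk--Hecke type computation for Gaussians; keeping the normalizations consistent between the Hermite functions $\Phi_\alpha$, the Laguerre functions $\psi_k^\delta$, and the semigroup is where most of the bookkeeping effort lies. Once the kernel identity is established, comparing coefficients of $r^k$ is routine.
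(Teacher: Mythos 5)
The paper itself offers no proof of this proposition: it is stated in Section 2 as a known Hecke--Bochner type formula for the Hermite projections, with \cite{Th1} as the background reference. So your sketch must be judged on its own merits. Its overall architecture --- Mehler's formula for $e^{-tH}$, angular reduction of the kernel against the solid harmonic $P$ via the spherical-harmonic/Bessel expansion of the exponential, a generating-function identity in $r=e^{-2t}$, and comparison of coefficients against $e^{-tH}=\sum_j e^{-(2j+n)t}P_j$ --- is exactly the standard route (essentially the proof in \cite{Th1}), and your endgame is right: since $\|\psi_k^\delta\|^2 = \Gamma(k+\delta+1)/(2\Gamma(k+1))$ in $L^2(\R_+, s^{2\delta+1}ds)$, the quantity $R_k^\delta(g)$ is precisely the coefficient of $g$ in the orthogonal basis $\{\psi_k^\delta\}$, and the only eigenvalues produced are $2(2k+m)+n$, which forces $P_jf=0$ unless $j=2k+m$.

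There is, however, one concrete misstep at what you correctly identify as the computational heart. After integrating the Mehler kernel against $P$ over spheres, the reduced radial operator has a kernel in \emph{two} radial variables, roughly a constant multiple of $(\sinh 2t)^{-(\delta+1)}\, e^{-\frac{1}{2}\coth(2t)(r^2+s^2)}\, I_\delta\big(\frac{rs}{\sinh 2t}\big)\big(\frac{rs}{\sinh 2t}\big)^{-\delta}$ with $\delta=\frac{n}{2}+m-1$. Its Laguerre expansion is the two-variable Hille--Hardy formula $\sum_k \frac{\Gamma(k+1)}{\Gamma(k+\delta+1)}\, w^k\, \psi_k^\delta(r)\psi_k^\delta(s)$ (with $w=e^{-4t}$, up to normalization), \emph{not} the one-variable identity $\sum_k L_k^\delta(x)e^{-x/2}r^k=(1-r)^{-\delta-1}e^{-\frac{1}{2}\frac{1+r}{1-r}x}$ that you quote from the paper, which expands only a single Gaussian. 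As written, the step ``match the radial part of the Mehler kernel with the quoted generating function'' would fail. Two standard repairs: (i) invoke Hille--Hardy in place of the quoted identity; or (ii) reorganize so the one-variable identity suffices --- expand $g=\sum_k R_k^\delta(g)\psi_k^\delta$ first and prove $e^{-tH}\big(P\,\psi_k^\delta(|\cdot|)\big)=e^{-(2(2k+m)+n)t}\,P\,\psi_k^\delta(|\cdot|)$ by applying $e^{-tH}$ to the generating function $P(x)(1-w)^{-\delta-1}e^{-\frac{1}{2}\frac{1+w}{1-w}|x|^2}$; the angular reduction together with Weber's integral $\int_0^\infty e^{-\beta s^2}\frac{J_\delta(\gamma s)}{(\gamma s)^\delta}\,s^{2\delta+1}\,ds=(2\beta)^{-(\delta+1)}e^{-\gamma^2/(4\beta)}$ shows that $e^{-tH}$ maps Gaussian-times-$P$ to Gaussian-times-$P$, and comparing powers of $w$ gives the eigenfunction property. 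Either repair turns your sketch into a complete proof.
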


The restrictions of solid harmonics to $ S^{n-1} $ are called
spherical harmonics. Let $ \{ Y_{mj}: 1 \leq j \leq d_m, m \in \N
\} $ be an orthonormal basis for $ L^2(S^{n-1}) $ consisting of
spherical harmonics. Given $ f \in L^2(\R^n) $ we have the
expansion
$$ f(r\omega) = \sum_{m=0}^\infty \sum_{j=1}^{d_m} f_{mj}(r)Y_{mj}
(\omega) $$ where $ f_{mj}$ are the spherical harmonic
coefficients of $ f $ defined by
$$ f_{mj}(r) = \int_{S^{n-1}}f(r\eta)Y_{mj}(\eta)~ d\eta.$$
The above proposition leads to the formula
$$ P_{2k}f(x) = \sum_{m=0}^k \sum_{j=1}^{d_{2m}}
R_{k-m}^{\frac{n}{2}+2m-1}(\tilde{f}_{2m,j})~
\psi_{k-m}^{\frac{n}{2}+2m-1}(r)~r^{2m}Y_{2m,j}(\omega).$$ where $
\tilde{f}_{m,j}(r)=r^{-m}f_{mj}(r) $. A similar formula can be
written for $ P_{2k+1}f $ as well. The functions $ \psi_k^\delta $
form an orthogonal system in $ L^2(\R_+, r^{2\delta+1}dr) $ and
suitably normalised they form an orthonormal basis.

\section{\bf Bargmann transform and Hardy's theorem}
\setcounter{equation}{0}

For the convenience of the readers we briefly recall the argument
used by Vemuri \cite{V} in proving Theorem 1.1. As we have already
mentioned we will be using variants of the same arguments, so it
will help fixing the ideas. Recall that the Bargmann transform $ B
$ defined by
$$ Bf(z)= e^{- \frac {1}{4}z^2} \int_{\mathbb{R}^n}f(x)~
e^{- \frac{1}{2}|x|^2} e^{z \cdot x } dx $$ for $z \in \C^n $ is
an isometric isomorphism from $ L^2(\R^n) $ onto the Fock space
consisting of entire functions on $ \C^n $ that are square
integrable with respect to the Gaussian $ e^{-\frac{1}{2}|z|^2}$,
see Bargmann \cite{B}. It takes the Hermite functions $
\Phi_\alpha $ onto the monomials $ \zeta_\alpha(z) = \big(
\frac{2^\alpha \alpha!}{\pi^{n/2}} \big)^{-\frac{1}{2}} z^\alpha.$
Moreover, it has the interesting property that $
B\widehat{f}(z)~=~ (2\pi)^{\frac{n}{2}}Bf(-iz).$

If $ f $ satisfies the Gaussian estimate $ f(x) = O(e^{-\frac{1}{2}a|x|^2}) $
then from the definition of $ B $ it follows that
$$ |Bf(w)| \leq  C(1+a)^{-\frac {n}{2}}~\exp\big(\frac{v^2+\mu u^2}{4} \big) $$
where $ \mu = \frac{1-a}{1+a}~,~w = u+iv $ and $ u^2 =
\sum_{j=1}^n u_j^2 $ etc. The relation $
B\widehat{f}(z)~=~(2\pi)^{\frac{n}{2}}Bf(-iz) $ then leads to
$$ |Bf(w)| \leq  C(1+a)^{-\frac {n}{2}}~\exp\big(\frac{u^2+\mu v^2}{4} \big) .$$
When $ n =1 $ and taking $ w = re^{i\theta} $ we get
$$ |Bf(w)| \leq  C(1+a)^{-\frac {1}{2}}
\exp\Big(\frac{(\mu +(1-\mu)\sin^2\theta)r^2}{4} \Big) $$ and
$$ |Bf(w)| \leq  C(1+a)^{-\frac {1}{2}}
\exp\Big(\frac{(\mu +(1-\mu)\cos^2\theta)r^2}{4} \Big) .$$ A
Phragmen-Lindelof argument then leads to the estimate
$$ |Bf(w)| \leq  C(1+a)^{-\frac {1}{2}}\exp\big(\frac{\sqrt{\mu}}{4}r^2 \big).$$

If $ c_k $ are the Taylor coefficients of $ Bf $ then Cauchy's estimates lead to
$$ |c_k| \leq C(1+a)^{-\frac {1}{2}}\exp\big(\frac{\sqrt{\mu}}{4}r^2 \big)r^{-k} $$
and optimizing with respect to $ r $ we can get
$$ |c_k| \leq  C(1+a)^{-\frac {1}{2}}\big(  \frac{e\sqrt{\mu}}{2k} \big)^{k/2}.$$
Since $ c_k $ are related to the Hermite coefficients of $ f $, we
get a slightly weaker form of Theorem 1.1. For the argument
leading to Theorem 1.1 we refer to \cite{V}. In the $
n-$dimensional case it is possible to use the same arguments to
prove Theorem 1.2 under the extra assumptions made in the
hypothesis. We leave the details to the reader.

\section{\bf Hardy's theorem for Hankel transform\\
and a proof of Theorem 1.3}
\setcounter{equation}{0}

For any  $\delta > -\frac{1}{2} $ we define the Hankel transform $
H_\delta $ on $  L^1(\mathbb{R}^+,r^{2\delta + 1}dr) $ by
$$ H_\delta f(r) = \int_0^\infty f(s) \frac{J_\delta(rs)}{(rs)^\delta}
s^{2\delta+1} ds.$$ It is well known that $ H_\delta $ extends to
$  L^2(\mathbb{R}^+,r^{2\delta + 1}dr) $ as a unitary operator and
the inversion formula is given by
$$ f(r) = \int_0^\infty H_\delta f(s) \frac{J_\delta(rs)}{(rs)^\delta}
s^{2\delta+1} ds $$ for all $ f $ for which $ H_\delta f $ is
integrable with respect to $ s^{2\delta+1}ds.$ Moreover, it is
known that $ H_{\delta}\psi_{k}^{\delta} =(-1)^{k}~
\psi_{k}^{\delta} .$ We will make use of this fact in what
follows.

An analogue of Hardy's theorem (i.e. the case $ ab \geq 1/4 $) is known for
the Hankel transform as well. We now prove an analogue of Theorem 1.1 for
the Hankel transform.

\begin{thm} Let $ f \in  L^1(\mathbb{R}^+,r^{2\delta + 1}dr) $ be such
that both $ f $ and $ H_\delta f $ satisfy the Hardy condition
with $ a = \tanh(2t).$ Then the Laguerre coefficients of $ f $
satisfy the following estimate:
$$ |(f,\psi_k^\delta)| \leq C (1+a)^{-\delta}
(4k+2\delta+1)^{\delta}e^{-2tk}.$$
\end{thm}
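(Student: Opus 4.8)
The plan is to sidestep the conversion factors inherent in the Bargmann picture of Section 3 and work directly with the generating function of the $\psi_k^\delta$, so that the Laguerre coefficients appear literally as Taylor coefficients. Write $(f,\psi_k^\delta)=\int_0^\infty f(s)\psi_k^\delta(s)s^{2\delta+1}\,ds$ and form the power series $\Psi(r)=\sum_{k=0}^\infty (f,\psi_k^\delta)\,r^k$. Multiplying the stated generating function identity (with $x=s^2$) by $f(s)s^{2\delta+1}$ and integrating gives, for $|r|<1$,
$$ \Psi(r)=(1-r)^{-\delta-1}\int_0^\infty f(s)\,e^{-\frac12\frac{1+r}{1-r}s^2}\,s^{2\delta+1}\,ds. $$
Since $H_\delta$ is self-adjoint and $H_\delta\psi_k^\delta=(-1)^k\psi_k^\delta$, we have $(H_\delta f,\psi_k^\delta)=(-1)^k(f,\psi_k^\delta)$, so $\Psi(-r)=\sum_k (H_\delta f,\psi_k^\delta)r^k$ enjoys the same representation with $f$ replaced by $H_\delta f$.

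First I would use the two Gaussian hypotheses to continue $\Psi$ analytically past the unit circle. Because $\mathrm{Re}\,\tfrac{1+r}{1-r}=\tfrac{1-|r|^2}{|1-r|^2}$, the $f$-integral converges precisely on the disk $\{1-|r|^2+a|1-r|^2>0\}$, which is centred at $-\tfrac{a}{1-a}$ with radius $\tfrac{1}{1-a}$ and reaches from $r=1$ to $r=-\tfrac{1+a}{1-a}$; the $H_\delta f$-representation converges on its mirror image. Writing $e^{4t}=\tfrac{1+a}{1-a}$ (valid since $a=\tanh 2t$), a short computation shows the union of the two disks contains $\{|r|<e^{2t}\}$, so $\Psi$ is analytic there and the two representations agree on the overlap (both equal the convergent series on $|r|<1$). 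Bounding the integrand by its modulus and evaluating the resulting Gaussian integral yields, on the first disk,
$$ |\Psi(r)|\le C\,\Gamma(\delta+1)\,2^\delta\,\frac{|1-r|^{\delta+1}}{\big(a|1-r|^2+1-|r|^2\big)^{\delta+1}}, $$
together with the mirror estimate from the $H_\delta f$-representation.

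The crux is to extract the sharp power of $k$ from these bounds. On a circle $|r|=\rho$ with $1<\rho<e^{2t}$ the two estimates cover complementary arcs and are large only near $r=\pm i\rho$, where the denominator equals $(1-a)(e^{4t}-\rho^2)=:(1-a)\epsilon$. I would \emph{not} use $|(f,\psi_k^\delta)|\le\rho^{-k}\max_{|r|=\rho}|\Psi|$, which near $\theta=\pm\tfrac{\pi}{2}$ (writing $\theta=\tfrac{\pi}{2}+\phi$) sees an integrand of size $((1-a)\epsilon+2a\rho|\phi|)^{-(\delta+1)}\sim\epsilon^{-(\delta+1)}$ and loses a factor of $k$. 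Instead I would integrate over the circle,
$$ |(f,\psi_k^\delta)|\le\frac{\rho^{-k}}{2\pi}\int_0^{2\pi}|\Psi(\rho e^{i\theta})|\,d\theta, $$
and use that $\int ((1-a)\epsilon+2a\rho|\phi|)^{-(\delta+1)}\,d\phi\sim\epsilon^{-\delta}$, one power better, so that $|(f,\psi_k^\delta)|\le C\rho^{-k}\epsilon^{-\delta}$.

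Finally I would optimise in $\rho$. Setting $\rho^2=e^{4t}-\epsilon$, so that $\rho^{-k}\approx e^{-2tk}\exp(\tfrac{k\epsilon}{2}e^{-4t})$, and minimising $\exp(\tfrac{k\epsilon}{2}e^{-4t})\epsilon^{-\delta}$ at $\epsilon^\ast=2\delta e^{4t}/k$ produces the exponential $e^{-2tk}$, the polynomial growth $k^\delta$, and — since $(1-a)^{-\delta}(e^{4t})^{-\delta}=(1+a)^{-\delta}$ — the constant $(1+a)^{-\delta}$; keeping the lower-order terms sharpens $k^\delta$ to $(4k+2\delta+1)^\delta$. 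The main obstacle is exactly this last sharpening: the naive maximum-modulus Cauchy estimate gives only $(4k+2\delta+1)^{\delta+1}$, and one must exploit the concentration of $|\Psi|$ near $r=\pm i\rho$ by integrating over the circle, while tracking the $\Gamma$- and $a$-dependent constants, to land on the stated bound. Verifying the term-by-term integration defining $\Psi$ and the agreement of the two continuations on their overlap is routine.
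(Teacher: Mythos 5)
Your route is genuinely different from the paper's. The paper proves this theorem by transplanting Vemuri's Bargmann-transform argument: it introduces Cholewinski's transform $U_\delta f(w)=e^{w^2/4}\int_0^\infty f(s)\,\frac{J_\delta(iws)}{(iws)^\delta}\,e^{-\frac12(w^2+s^2)}s^{2\delta+1}ds$, which carries $\psi_k^\delta$ to constant multiples of $w^{2k}$ and satisfies $U_\delta H_\delta f(w)=U_\delta f(-iw)$, so the two Hardy conditions give Gaussian-type growth bounds of $U_\delta f$ in complementary sectors; a Phragm\'en--Lindel\"of interpolation and refined Cauchy estimates on the Taylor coefficients (the step the paper delegates to Vemuri \cite{V}) then produce the bound. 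You instead work with the unweighted generating function $\Psi(r)=\sum_k (f,\psi_k^\delta)r^k$, whose closed form $(1-r)^{-\delta-1}\int_0^\infty f(s)e^{-\frac12\frac{1+r}{1-r}s^2}s^{2\delta+1}ds$, together with its mirror obtained from $(H_\delta f,\psi_k^\delta)=(-1)^k(f,\psi_k^\delta)$, continues $\Psi$ analytically to the union of two disks covering $\{|r|<e^{2t}\}$. Your geometry checks out: $\rho e^{i\theta}$ lies in the union exactly when $(1-a)(e^{4t}-\rho^2)+2a\rho|\cos\theta|>0$, the two disks overlap in a connected lens containing the unit disk, and the Gaussian integral gives the stated pointwise bound. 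Your key quantitative idea -- replacing the sup-norm Cauchy estimate by the mean of $|\Psi|$ over the circle, exploiting that the blow-up near $\pm i\rho$ is of size $\big((1-a)\epsilon+2a\rho|\phi|\big)^{-(\delta+1)}$ and hence integrates to $\epsilon^{-\delta}$ -- is sound and does recover the sharp power $k^\delta$ after optimizing $\epsilon\sim\delta e^{4t}/k$. This is more elementary and more self-contained than the paper's proof (no Phragm\'en--Lindel\"of, no Cholewinski space, no appeal to \cite{V} for the refinement); what the paper's route buys in exchange is the $U_\delta$ machinery itself, which Section 5 reuses when the vector-valued Bargmann transform is decomposed into the blocks $U_{\frac n2+m-1}$.

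One genuine caveat: your arc estimate $\int\big((1-a)\epsilon+2a\rho|\phi|\big)^{-(\delta+1)}d\phi\sim\epsilon^{-\delta}$ holds only for $\delta>0$. At $\delta=0$ the integral is $\sim\log(1/\epsilon)$, and optimizing in $\epsilon$ then yields $|(f,\psi_k^0)|\le Ce^{-2tk}\log k$, a logarithm short of the stated bound; this case is in scope (Section 4 defines $H_\delta$ for all $\delta>-\frac12$, and Theorem 4.3 uses $\delta=n-1$, so $n=1$ hits $\delta=0$). For $-\frac12<\delta<0$ your method gives only $O(e^{-2tk})$ and misses the decaying factor $(4k+2\delta+1)^\delta$ altogether. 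So either restrict your argument to $\delta>0$ or supply a separate patch for $\delta\le 0$, where the paper's Vemuri-style refinement is claimed to apply. A lesser point: you assert rather than show that the constants land exactly on $(1+a)^{-\delta}$; $a$-dependent factors such as $|1\mp r|^{\delta+1}\le(1+e^{2t})^{\delta+1}$ and the $(2a\rho\delta)^{-1}$ from the arc integral survive your bookkeeping, though since $0<a<1$ keeps $(1+a)^{-\delta}$ bounded above and below this is cosmetic. The interchanges of sum and integral and the agreement of the two continuations are, as you say, routine.
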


The proof of this theorem is similar to that of Theorem 1.1 given
in \cite{V}. We just need to replace the Bargmann transform by
another transform adapted to the Hankel transform. We now proceed
to define this transform which we denote by $ U_\delta.$  For $ f
\in L^{^2}(\mathbb{R}^+,r^{2\delta+1}~dr) $ we let
$$ U_\delta f(w) = e^{\frac{w^2}{4}} \int_0^\infty f(s)
~\frac{J_\delta(iws)}{(iws)^\delta}e^{-\frac{1}{2}(w^2+s^2)}
~s^{2\delta+1}ds.$$ It is clear that $ U_\delta f $ extends to $
\C $ as an even entire function of $ w.$ Moreover, the generating
function identity
$$ \sum_{k=0}^{\infty}\frac{L_k^\delta(x)}{\Gamma(k+\delta+1)}~w^k
= e^w(xw)^{-\frac{\delta}{2}}J_\delta\big(2(xw)^{\frac{1}{2}}\big)
$$ satisfied by the Laguerre polynomials can be rewritten as
$$
\sum_{k=0}^{\infty}\frac{(-1)^k~2^{-2k}}{\Gamma(k+\delta+1)}
\psi_k^\delta(r)~w^{2k} ~=~ 2^\delta
e^{\frac{w^2}{4}}~\frac{J_\delta(irw)}{(irw)^{\delta}}
~{e^{-\frac{1}{2}(r^2+w^2)}}.$$ In view of this we have
$$ U_\delta f(w) = 2^{-\delta}\sum_{k=0}^{\infty}\frac{(-1)^k~
2^{-2k}}{\Gamma(k+\delta+1)} (f,\psi_k^\delta)~w^{2k}.$$ This
shows that the transformation $ U_\delta $ takes the Laguerre
functions $ \psi_k^\delta $ onto constant multiples of the
monomials $ w^{2k}.$ We also have the relation $ U_\delta H_\delta
f(w) = U_\delta f(-iw) $ which follows from the fact that $
H_\delta \psi_k^\delta = (-1)^k \psi_k^\delta.$

The image of $  L^2(\mathbb{R}^+,r^{2\delta+1}~dr) $ under the
transform $ U_\delta $ is known to be a weighted Bargmann space,
see \cite{C}. Indeed, if we let
$$ h(w) = \frac{2^\delta}{\pi} \big(\frac{|w|^2}{2}\big)
^{2\delta + 1} K_{\delta +\frac{1}{2}}(\frac{|w|^2}{2}) $$ where
$$ K_\delta(z) = (\frac{\pi}{2z})^{\frac{1}{2}} \frac{e^{-z}}
{{\Gamma(\delta + \frac{1}{2})}}\int_0^\infty e^{-t} t^{\delta -
\frac{1}{2}} \big(1 + \frac{t}{2z} \big)^ {\delta - \frac{1}{2}}
dt $$ then the image is precisely the Hilbert space of even entire
functions that are square integrable with respect to $ h(w) dw $
(see Cholewinski \cite{C}). As $ h(w) $ is radial it is clear that
$ w^{2k} $ form an orthogonal system with respect to $ h(w) dw.$
Moreover, it can be shown that (see \cite{C})
$$ \int_{\C} |w|^{4k} h(w) dw = 2^{1+2\delta}~2^{4k}
\Gamma(k+1)~\Gamma(k+\delta+1).$$
Thus, if we let
$$ \zeta_k(w)= (-1)^{k}\Big(2^{1+2\delta+4k}~
\Gamma(k+1)~\Gamma(k+\delta+1)\Big)^{-\frac{1}{2}}w^{2k} $$ then $
\zeta_k $ form an orthonormal basis for the  image of $
L^2(\mathbb{R}^+,r^{2\delta+1}~dr) $ under $ U_\delta $ and $
U_\delta \Big(\big(2\frac{\Gamma(k+1)}{\Gamma(k+\delta+1)}\big)^
{\frac{1}{2}}\psi_{_k}^{^\delta}\Big)(w) = \zeta_k(w).$

We can now proceed as in Vemuri \cite{V} with $ U_\delta $ playing the role of the
Bargmann transform to prove Theorem 4.1.

We now use Hardy's theorem for the Hankel transform to prove
Theorem 1.3. Let $ \mathfrak{F}_s $ stands for the symplectic
Fourier transform. We need the following estimate on the
Fourier-Wigner transform $ V(f,f) $ when $ f $ and $ \hat{f} $
satisfy Hardy conditions.

\begin{prop} Let  $f \in L^2(\R^n) $ satisfies $|f(x)| \leq Ce^{-a|x|^2}\
$ and $|\widehat{f}(\xi)| \leq Ce^{-a|\xi|^2}$ for some $ C > 0 $
and $a > 0.$ Then $|V(f,f)(z)| \leq C_na^{-\frac{n}{2}} e^{-\frac
{1}{4}a|z|^2}$ and $|\mathfrak{F}_sV(f,f)(z)| \leq
C_na^{-\frac{n}{2}} e^{-\frac {1}{4}a|z|^2}$, where $ C_n>0 $
depends only on C and n.
\end{prop}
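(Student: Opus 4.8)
The plan is to estimate $V(f,f)(z)$ directly from its defining integral and then obtain the estimate on $\mathfrak{F}_s V(f,f)$ by exploiting the symmetry between $f$ and $\hat f$ in the hypotheses. Writing $z = x+iy$ with $x,y \in \R^n$, the definition gives
$$ V(f,f)(x+iy) = (2\pi)^{-n/2} \int_{\R^n} e^{i(x\cdot\xi + \frac{1}{2}x\cdot y)} f(\xi+y)\overline{f(\xi)}\, d\xi. $$
The modulus of the exponential phase is $1$, so the first step is simply to bound $|V(f,f)(x+iy)| \leq (2\pi)^{-n/2} \int_{\R^n} |f(\xi+y)|\,|f(\xi)|\, d\xi$ and feed in the pointwise Gaussian bound $|f(\xi)| \leq C e^{-a|\xi|^2}$. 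This reduces the problem to estimating the Gaussian convolution $\int_{\R^n} e^{-a|\xi+y|^2} e^{-a|\xi|^2}\, d\xi$.

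The key computation is to complete the square in $\xi$. Expanding, $a|\xi+y|^2 + a|\xi|^2 = 2a|\xi + \tfrac{1}{2}y|^2 + \tfrac{1}{2}a|y|^2$, so after the substitution $\eta = \xi + \tfrac12 y$ the integral factors as $e^{-\frac{1}{2}a|y|^2}\int_{\R^n} e^{-2a|\eta|^2}\, d\eta = e^{-\frac{1}{2}a|y|^2}\,(\pi/2a)^{n/2}$. This yields
$$ |V(f,f)(x+iy)| \leq C_n\, a^{-n/2}\, e^{-\frac{1}{2}a|y|^2}, $$
with $C_n$ depending only on $C$ and $n$. Note this bound sees only the $y$-variable and carries no decay in $x$ — exactly the phenomenon flagged in the introduction. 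To recover decay in $x$ as well, I would run the symmetric argument: the symplectic Fourier transform $\mathfrak{F}_s$ interchanges the roles of the configuration and frequency variables, and one has a companion integral formula expressing $\mathfrak{F}_s V(f,f)$ in terms of $\hat f$. Since $\hat f$ satisfies the identical Gaussian hypothesis, the same completion-of-the-square computation gives $|\mathfrak{F}_s V(f,f)(x+iy)| \leq C_n a^{-n/2} e^{-\frac{1}{2}a|y|^2}$; equivalently, by the inversion relation, $V(f,f)$ inherits Gaussian decay in $x$ of the same strength.

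The main obstacle is organizing the two one-sided estimates into the single symmetric bound $|V(f,f)(z)| \leq C_n a^{-n/2} e^{-\frac{1}{4}a|z|^2}$ claimed in the statement, where $|z|^2 = |x|^2 + |y|^2$. Having a bound by $e^{-\frac{1}{2}a|y|^2}$ and a companion bound by $e^{-\frac{1}{2}a|x|^2}$, one takes the geometric mean (multiply the two bounds and take the square root) to obtain $e^{-\frac{1}{4}a(|x|^2+|y|^2)} = e^{-\frac{1}{4}a|z|^2}$, at the cost of absorbing the product of prefactors into a new constant $C_n a^{-n/2}$. The delicate point is to make the second estimate rigorous: one must verify precisely how $\mathfrak{F}_s$ acts on the Fourier-Wigner transform and confirm that the $\hat f$-integral is genuinely governed by the same Gaussian, so that the symplectic Fourier transform indeed produces the decay in the complementary variable. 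Once both directions are in hand, the geometric-mean step is purely algebraic.
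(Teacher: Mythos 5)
Your first estimate and your final step are exactly the paper's: bounding the defining integral and completing the square, $a|\xi+y|^2 + a|\xi|^2 = 2a|\xi+\frac{1}{2}y|^2 + \frac{1}{2}a|y|^2$, yields $|V(f,f)(x+iy)| \leq C_n a^{-n/2} e^{-\frac{1}{2}a|y|^2}$, and the geometric mean of the $y$-decay and $x$-decay bounds gives $e^{-\frac{1}{4}a|z|^2}$. The genuine gap is in how you obtain the $x$-decay. You propose to bound $\mathfrak{F}_s V(f,f)$ using $\widehat{f}$ and then conclude that ``by the inversion relation, $V(f,f)$ inherits Gaussian decay in $x$.'' That inference is invalid: pointwise Gaussian decay of a Fourier transform does not transfer back through Fourier inversion --- decay of $\mathfrak{F}_s V(f,f)$ yields smoothness of $V(f,f)$, not decay. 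What actually does the job, and what the paper uses, is the covariance identity $V(\widehat{f},\widehat{f})(z) = (2\pi)^n V(f,f)(iz)$, proved by applying Fourier inversion \emph{inside} the defining integral rather than to $V$ itself. Since multiplication by $-i$ sends $x+iy$ to $y-ix$, applying your already-proved $y$-decay bound to $V(\widehat{f},\widehat{f})$ --- legitimate because $\widehat{f}$ satisfies the same Gaussian hypothesis --- and substituting $z \mapsto -iz$ gives $|V(f,f)(z)| = (2\pi)^{-n}|V(\widehat{f},\widehat{f})(-iz)| \leq C_n a^{-n/2} e^{-\frac{1}{2}a|x|^2}$. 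So the ``interchange of configuration and frequency variables'' must be implemented by this rotation of the complex argument under $f \mapsto \widehat{f}$, not by $\mathfrak{F}_s$.

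A second, related gap: your sketch never actually establishes the second assertion of the proposition, namely the full bound $|\mathfrak{F}_s V(f,f)(z)| \leq C_n a^{-n/2} e^{-\frac{1}{4}a|z|^2}$. At best your companion formula would give one-sided decay in a single variable, and you explicitly defer the verification of ``how $\mathfrak{F}_s$ acts on the Fourier-Wigner transform,'' which is precisely the missing ingredient. The paper closes this with the identity $\mathfrak{F}_s V(f,f)(z) = (8\pi)^{\frac{n}{2}} V(f,\widetilde{f})(z)$: since $\widetilde{f}$ and its Fourier transform satisfy the same Gaussian bounds as $f$ and $\widehat{f}$, the two-sided estimate already established for Fourier-Wigner transforms applies verbatim and yields the $e^{-\frac{1}{4}a|z|^2}$ bound for $\mathfrak{F}_s V(f,f)$ directly; no inversion step is ever needed.
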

\begin{proof} By definition, for $ z= x+iy \in \mathbb{C}^n$
$$ V(f,f)(z)=(2\pi)^{-\frac{n}{2}}\int_{{\mathbb{R}}^n}
e^{i(x.\xi+\frac{1}{2}x.y)}f(\xi+y)~\overline{f(\xi)}~d\xi.$$ An
easy calculation using Fourier inversion shows that
$$ V(\widehat{f},\widehat{f})(z) = (2\pi)^{n}~V(f,f)(iz).$$
From the definition it follows that $|V(f,f)(z) | $ is bounded by
$$ (2\pi)^{-\frac{n}{2}}\int_{{\mathbb{R}}^n}
|f(\xi+y) | ~| \overline{f(\xi)} | ~d\xi  \leq C
\int_{{\mathbb{R}}^n}  e^{-a |\xi+y|^2} e^{-a|\xi|^2}~d\xi.$$ The
last integral is equal to
$$ C e^{-\frac{1}{2}a|y|^2}
\int_{{\mathbb{R}}^n} e^{-2a | \xi+ \frac{y}{2}|^2}~d\xi
= C_n a^{-\frac{n}{2}} e^{-\frac{1}{2}a|y|^2}.$$
Replacing $f$ by $\widehat{f}$, we also get
$$ |V(\widehat{f},\widehat{f})(z) | \leq C_n a^{-\frac{n}{2}}
e^{-\frac{1}{2}a|y|^2}.$$
And thus the relation $V(f,f)(z)=(2\pi)^{-n} V(\widehat{f},\widehat{f})(-iz)$
gives
$$ |V(f,f)(z) |  \leq  C_n a^{-\frac{n}{2}} e^{-\frac{1}{2}a|x|^2}.$$
Combining these two, we get
$$ |V(f,f)(z) | \leq C_n a^{-\frac{n}{2}} e^{-\frac{1}{4}a|z|^2}.$$
The above calculation together with the relation $\mathfrak{F}_sV(f,f)(z)=
(8\pi)^{\frac{n}{2}}V(f,\widetilde{f})(z)$ implies that
$$ |\mathfrak{F}_sV(f,f)(z) | \leq C_n
a^{-\frac{n}{2}}e^{-\frac{1}{4} a|z|^2}.$$
This completes the proof of the proposition.
\end{proof}

In view of the expression for the norm of $ P_k $ in terms of the
Laguerre coefficients of $ V(f,f) ,$ in order to prove Theorem 1.3
we only need to prove the following result.

\begin{thm} Suppose $ f \in L^2(\R^n) $ is such that $ |V(f,f)(z)| \leq C
e^{-\frac{1}{8}a|z|^2} $ and
$|\mathfrak{F}_sV(f,f)(z)| \leq C e^{-\frac {1}{8}a|z|^2}.$
Then
$$ |\int_{\C^n} V(f,f)(z) \varphi_k^{n-1}(z) dz | \leq C (2k+n)^{n-1} e^{-(2k+n)s} $$
where $ s $ is determined by $ a/2 = \tanh(2s).$
\end{thm}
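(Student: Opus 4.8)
My plan is to exploit the fact that $\varphi_k^{n-1}$ is radial on $\C^n\cong\R^{2n}$, so that the integral sees $V(f,f)$ only through its spherical average, which after a change of variables becomes a Laguerre coefficient to which Theorem 4.1 applies with $\delta=n-1$. Writing $\rho=|z|$ and
$$ g(\rho)=\frac{1}{|S^{2n-1}|}\int_{S^{2n-1}}V(f,f)(\rho\omega)\,d\omega $$
for the normalized spherical average, the radiality of $\varphi_k^{n-1}(z)=L_k^{(n-1)}(\tfrac12|z|^2)e^{-\frac14|z|^2}$ gives
$$ \int_{\C^n}V(f,f)(z)\varphi_k^{n-1}(z)\,dz=|S^{2n-1}|\int_0^\infty g(\rho)\varphi_k^{n-1}(\rho)\,\rho^{2n-1}\,d\rho. $$
Since $\varphi_k^{n-1}(\rho)=\psi_k^{n-1}(\rho/\sqrt2)$, the substitution $u=\rho/\sqrt2$ converts this, up to the explicit constant $2^n|S^{2n-1}|$, into the Laguerre coefficient $(G,\psi_k^{n-1})$ in $L^2(\R_+,u^{2n-1}du)$ of the rescaled profile $G(u)=g(\sqrt2\,u)$. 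It then suffices to bound $(G,\psi_k^{n-1})$ by Theorem 4.1 with $\delta=n-1$.

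To apply Theorem 4.1 I must check its two Hardy conditions. From $|V(f,f)(z)|\le Ce^{-\frac18 a|z|^2}$ I get $|g(\rho)|\le Ce^{-\frac18 a\rho^2}$, hence $|G(u)|\le Ce^{-\frac14 au^2}$; this is the Hankel Hardy condition with parameter $a/2$, and it also guarantees $G\in L^1(\R_+,u^{2n-1}du)$. The delicate point is the transform side. Here I would use the structural fact that the symplectic Fourier transform, restricted to radial functions and read in the variable $u=\rho/\sqrt2$, coincides with the Hankel transform $H_{n-1}$; this is forced by the matching eigenrelations $H_{n-1}\psi_k^{n-1}=(-1)^k\psi_k^{n-1}$ and $\mathfrak{F}_s\varphi_k^{n-1}=(-1)^k\varphi_k^{n-1}$ (up to a fixed constant) together with $\varphi_k^{n-1}(\sqrt2\,u)=\psi_k^{n-1}(u)$. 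Consequently, if $g_1$ is the spherical average of $\mathfrak{F}_s V(f,f)$ and $G_1(u)=g_1(\sqrt2\,u)$, then $G_1=H_{n-1}G$; the bound $|\mathfrak{F}_s V(f,f)(z)|\le Ce^{-\frac18 a|z|^2}$ then yields $|H_{n-1}G(u)|=|G_1(u)|\le Ce^{-\frac14 au^2}$, the same Hardy condition with parameter $a/2$.

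With both conditions in force at the common parameter $a/2=\tanh(2s)$, Theorem 4.1 with $\delta=n-1$ gives
$$ |(G,\psi_k^{n-1})|\le C\,(1+\tfrac a2)^{-(n-1)}(4k+2n-1)^{n-1}e^{-2sk}. $$
Using $4k+2n-1\le 2(2k+n)$ and $e^{-2sk}=e^{ns}e^{-(2k+n)s}$, with $e^{ns}$ independent of $k$, this becomes $|(G,\psi_k^{n-1})|\le C(2k+n)^{n-1}e^{-(2k+n)s}$, and multiplying by $2^n|S^{2n-1}|$ recovers the stated estimate.

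The main obstacle is exactly the scale bookkeeping in the transform step: because $\varphi_k^{n-1}$ and $\psi_k^{n-1}$ sit at Gaussian scales differing by a factor $\sqrt2$, a naive identification of $\mathfrak{F}_s$ with $H_{n-1}$ produces the wrong Hardy parameter, and one must verify that the single rescaling $u=\rho/\sqrt2$ simultaneously normalises the Laguerre functions and intertwines $\mathfrak{F}_s$ with $H_{n-1}$, so that $G$ and $H_{n-1}G$ decay with the identical parameter $a/2$. Once this intertwining is in place the remaining steps are the routine reduction to the spherical average and the constant-tracking in the final application of Theorem 4.1.
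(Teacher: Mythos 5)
Your proposal is correct and follows essentially the same route as the paper's own proof of this theorem: reduce by radiality to the spherical average, rescale by $\sqrt{2}$ to pass from $\varphi_k^{n-1}$ to $\psi_k^{n-1}$, identify the transform side with the Hankel transform $H_{n-1}$ so that both $G$ and $H_{n-1}G$ satisfy the Hardy condition with parameter $a/2=\tanh(2s)$, and invoke Theorem 4.1 with $\delta=n-1$. The only (minor) divergence is in how the intertwining is verified: the paper computes directly, using $\mathfrak{F}_sV(f,f)(\sqrt{2}\,r\omega)=\widehat{V(f,f)}\bigl(\tfrac{r}{\sqrt{2}}\omega\bigr)$ and Bochner's formula for the Fourier transform of radial functions to show that the rescaled spherical average of $\mathfrak{F}_sV(f,f)$ equals $2^n(H_{n-1}G)(r)$, whereas you deduce the same identity abstractly from the common eigenrelations $H_{n-1}\psi_k^{n-1}=(-1)^k\psi_k^{n-1}$ and $\mathfrak{F}_s\varphi_k^{n-1}=(-1)^k\varphi_k^{n-1}$ on the orthogonal basis $\{\psi_k^{n-1}\}$ of $L^2(\R_+,u^{2n-1}du)$ -- a legitimate substitute, provided you note (as you implicitly do via continuity of both sides) that the $L^2$ identity upgrades to the pointwise Gaussian bound.
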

\begin{proof} As $ \varphi_k^{n-1}(z) $ is radial, recalling the definition of
$ \psi_k^{n-1} $ the integral we want to estimate reduces to $
2^n\int_0^\infty F(\sqrt{2}r) \psi_k^{n-1}(r) r^{2n-1} dr $ where
$$ F(r) = \int_{S^{2n-1}} V(f,f)(r\omega) d\omega $$  which clearly satisfies the
estimate $ |F(r)| \leq C e^{-\frac{1}{8}ar^2}.$ If we can show
that the function $ G(r) = F(\sqrt{2}r) $ satisfies the estimate $
|H_{n-1}G(r)| \leq C e^{-\frac{1}{4}a^2} $, then we can appeal to
Theorem 4.1 to get the required estimate. Since $ \varphi_k^{n-1}
$ is an eigenfunction of the symplectic Fourier transform
$$ \int_{\C^n} V(f,f)(z) \varphi_k^{n-1}(z) dz  = (-1)^k \int_{\C^n}
\mathfrak{F}_sV(f,f)(z) \varphi_k^{n-1}(z) dz .$$

We now perform the following calculations: \Bea \int_{S^{2n-1}}
\mathfrak{F}_s {V(f,f)}( \sqrt{2}~r\omega)~d\omega &=&
\int_{S^{2n-1}} \widehat{V(f,f)}(-{\frac{i}{2}} \sqrt{2}~r\omega)~
d\omega\\
&=& \int_{S^{2n-1}} \widehat{V(f,f)}({\frac{r}{\sqrt{2}}} \omega)~d\omega \\
&=& \int_0^\infty F(s) \frac{J_{n-1}({\frac{r}{\sqrt{2}}}s)}
{({\frac{r}{\sqrt{2}}}s)^{n-1}}s^{2n-1}ds \\
&=& 2^n \int_0^{\infty}F(\sqrt{2}s) \frac{J_{n-1}(rs)}{(rs)^{n-1}}
s^{2n-1}ds \\
&=& 2^n \int_0^{\infty}G(s) \frac{J_{n-1}(rs)}{(rs)^{n-1}}
s^{2n-1}ds \\
&=& 2^n (H_{n-1}G)(r)
\Eea
which proves our claim on $ H_{n-1}G(r).$
\end{proof}

If we can improve the estimates in Proposition 4.2 to $|V(f,f)(z)|
\leq C e^{-\frac {1}{2}a|z|^2}$ and $|\mathfrak{F}_sV(f,f)(z)|
\leq C e^{-\frac {1}{2}a|z|^2}$ then we could prove Theorem 4.3
with $ \tanh(2s) = a .$ In fact, one needs only
$$ | \int_{S^{2n-1}} V(f,f)(r\omega) d\omega | \leq C e^{-\frac{1}{2}ar^2} $$
and a similar estimate for $ \mathfrak{F}_sV(f,f) $ which is good
enough to improve Theorem 1.3. But there are some limitations on
the decay of Fourier-Wigner transform due to the uncertainty
principle proved in \cite{G}. The following example shows that
improving the estimates in Proposition 4.2 is not always possible
which means that the proof via Fourier-Wigner transform is not
robust enough to lead to Theorem 1.3.

\begin{exam} Let $a=\frac{1}{\sqrt{2}}$ and consider the function $f \in
L^2(\R^2)$, defined by \Bea
f(x_1,x_2)=e^{-\frac{a}{2}(x_1^2+x_2^2+2ix_1x_2)}. \Eea An easy
calculation (using $ a = \frac{1}{2a}$) shows that \Bea
\widehat{f}(\xi,\eta) =  \frac{\pi \sqrt{2}}{a}
e^{-\frac{1}{4a}(\xi^2 +\eta^2 -2i \xi \eta)} =  2\pi ~
e^{-\frac{a}{2}(\xi^2+\eta^2-2i\xi\eta)} \Eea and with $ z
=(x_1+iy_1,x_2+iy_2) $ \Bea V(f,f)(z) =
\frac{1}{2a}~e^{-\frac{a}{2}|z|^2} ~e^{\frac{1}{2}(x_1 y_2+~x_2
y_1)}. \Eea From the above expression for $ V(f,f) $, it is clear
that the estimate
$$ | V(f,f)(z)| \leq C e^{-\frac{a}{4}|z|^2} $$ is not valid.

For any $ 0<b<\frac{1}{2} $, let us write $$ R_b=\big
\{\omega=(\omega_1,\omega_2,\omega_3,\omega_4) \in S^3 ~ \big| ~
\omega_1\omega_4>b,~ \omega_2\omega_3>0 \big \} .$$ Clearly, $R_b$
is a subset of $S^3$ of positive measure. Let us denote its
measure by $ |R_b|. $ Then, \Bea \int_{S^3} V(f,f)(r\omega)
d\omega &=& \frac{1}{2a}~e^{-\frac{a}{2}r^2} \int_{S^3}
e^{\frac{1}{2}r^2(\omega_{_1}\omega_{_4}+\omega_{_2}\omega_{_3})} d\omega \\
& \geq & \frac{1}{2a}~ e^{-\frac{a}{2}r^2}\int_{R_b}
e^{\frac{1}{2}r^2 (\omega_{_1}\omega_{_4}+\omega_{_2}\omega_{_3})} d\omega \\
& \geq & \frac{1}{2a}|R_b|~e^{-\frac{a}{2}r^2}e^{\frac{b}{2}r^2} \\
&=& \frac{1}{2a}|R_b|~e^{-\frac{a}{4}r^2}
e^{\frac{1}{2}(b-\frac{a}{2})r^2}\Eea Since
$a=\frac{1}{\sqrt{2}}$, one can choose $0<b<\frac{1}{2}$ such that
$b-\frac{a}{2}>0$. Therefore, $ \int_{S^3} V(f,f)(r\omega) d\omega
$ can not be bounded by $ e^{-\frac{a}{4}r^2} $.

However we can show that $ \|P_kf\|_2 $ has the required decay.
Indeed, for any $ 0 < r < 1,$  we have
$$ \sum_{k=0}^\infty r^k \|P_kf\|_2^2 = (2\pi)^{-1} \int_{\C^2} V(f,f)(z)
\left(\sum_{k=0}^\infty r^k \varphi_k^1(z)\right) dz $$ and hence using the
generating function identity
$$ \sum_{k=0}^\infty r^k \varphi_k^1(z) = (1-r)^{-1}e^{-\frac{1}{4}
\frac{1+r}{1-r}|z|^2}  $$ and the explicit expression for $ V(f,f) $
we can calculate that
$$ \sum_{k=0}^\infty r^k \|P_kf\|_2^2  =  \frac{8\pi a(1-r)^{-2}}
{(2a+\frac{1+r}{1-r}) \Big((2a+\frac{1+r}{1-r})-
\frac{1}{(2a+\frac{1+r}{1-r})}\Big)}.$$ Writing $\mu =
\frac{1-a}{1+a}$ and simplifying, the above takes the form
$$ \sum_{k=0}^\infty r^k \|P_kf\|_2^2  = \frac{2\pi}{1+a} \sum_{k=0}^\infty
\mu^k r^{2k}.$$
Comparing coefficients of $ r^k $ we see that $ P_{2k+1}f = 0 $ and
$ \|P_{2k}f\|_2^2 = \frac{2\pi}{1+a}\mu^k $ which is the expected decay.
\end{exam}

\section{\bf A vector valued Bargmann transform\\
and a proof of Theorem 1.4}
\setcounter{equation}{0}

In order to prove Theorem 1.4 we need to study a vector valued
Bargmann transform. For functions $ f $ from $ L^2(\R^n) $
consider
$$ Bf(z, \omega)=e^{- \frac{1}{4}z^2} \int_{\mathbb{R}^n}f(x)~
e^{- \frac{1}{2} |x|^2} e^{zx.\omega} dx $$ where  $z \in
\mathbb{C}$ and $\omega \in S^{n-1}.$ We think of $ Bf $ as an
entire function of the one complex variable taking values in the
vector space $ L^2(S^{n-1}).$ As before, one  can easily verify
that $
B\widehat{f}(z,\omega)~=~(2\pi)^{\frac{n}{2}}Bf(-iz,\omega).$ We
consider functions satisfying the conditions:
\begin{equation}
\left(\int_{S^{n-1}} |f(s\eta)|^2~d \eta \right)^{\frac {1}{2}} \leq C e^{-
\frac{a}{2}s^2},
\end{equation}
\begin{equation}
\left(\int_{S^{n-1}} |\widehat{f}(s\eta)|^2~d\eta \right)^{\frac {1}{2}}\leq C
e^{- \frac{a}{2}s^2}
\end{equation}
for some $a > 0$. The basic estimates on the Bargmann transforms of such functions
are given below.

\begin{prop} If $f \in L^2( {\mathbb{R}}^n)$ be such that
(5.1) and (5.2) are valid for some $ a > 0 $. Then for every $z=u+iv
\in \mathbb{C}$,
$$ \int_{S^{n-1}} |Bf(z,\omega)|^2~d \omega  \leq
C(1+a)^{-n}~\exp\big(\frac{v^2+\mu u^2}{2} \big), $$
$$ \int_{S^{n-1}} |Bf(z,\omega)|^2~d \omega  \leq C
(1+a)^{-n} \exp\big(\frac{u^2+\mu v^2}{2} \big) $$
where $\mu = \frac {1-a}{1+a}$ as before.
\end{prop}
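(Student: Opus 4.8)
The plan is to prove only the first inequality; the second is not independent of it. Since $\widehat{f}$ satisfies (5.1) by virtue of the hypothesis (5.2) on $f$ (and likewise satisfies the companion condition, as $\widehat{\widehat{f}}(\xi)=f(-\xi)$), I would apply the first inequality to $g=\widehat{f}$ and then invoke the intertwining relation $B\widehat{f}(z,\omega)=(2\pi)^{\frac{n}{2}}Bf(-iz,\omega)$ already recorded before the statement. Writing $z=u+iv$, the substitution $z\mapsto -iz=v-iu$ interchanges the real and imaginary parts, so $v^2+\mu u^2$ turns into $u^2+\mu v^2$; this yields the second estimate from the first exactly as the two scalar bounds in Section 3 are deduced from one another.

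For the first inequality I would strip off the Gaussian prefactor. Since $|e^{-\frac14 z^2}|=e^{\frac14(v^2-u^2)}$ and $|e^{zx\cdot\omega}|=e^{ux\cdot\omega}$ for real $x,\omega$, one has $|Bf(z,\omega)|^2=e^{\frac12(v^2-u^2)}\,\big|\int_{\R^n}f(x)e^{-\frac12|x|^2}e^{zx\cdot\omega}\,dx\big|^2$. Using $\tfrac12(1+\mu)=(1+a)^{-1}$, the claim is equivalent to $\int_{S^{n-1}}\big|\int_{\R^n}f(x)e^{-\frac12|x|^2}e^{zx\cdot\omega}\,dx\big|^2\,d\omega\le C(1+a)^{-n}e^{u^2/(1+a)}$, a bound in which $v$ no longer appears on the right. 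Completing the square, $ux\cdot\omega-\frac12|x|^2=\frac12u^2-\frac12|x-u\omega|^2$, and dominating $f$ by $|f|$ (the unimodular oscillation $e^{ivx\cdot\omega}$ being harmless under the modulus), the inner integral is controlled by $e^{\frac12u^2}(|f|*\gamma)(u\omega)$ with $\gamma(x)=e^{-\frac12|x|^2}$. Everything thus reduces to the Gaussian-convolution estimate $\int_{S^{n-1}}|(|f|*\gamma)(u\omega)|^2\,d\omega\le C(1+a)^{-n}e^{-\frac{a}{1+a}u^2}$, which is the natural $L^2(S^{n-1})$ form of (5.1): convolving a profile decaying like $e^{-\frac{a}{2}s^2}$ with $\gamma$ produces the rate $\frac{a}{2(1+a)}$, and squaring doubles it.

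To establish this I would expand the square and integrate over $\omega$ first. By rotational invariance the resulting angular factor $\int_{S^{n-1}}e^{u(x+y)\cdot\omega}\,d\omega$ is a modified Bessel integral depending only on $u|x+y|$; the remaining double integral in $x,y$ is then evaluated in polar coordinates, feeding in (5.1) for the angular $L^2$ norm of $f$ and computing the radial Gaussians. The radial test function $f(x)=e^{-\frac{a}{2}|x|^2}$, for which $Bf(z,\omega)=\big(2\pi/(1+a)\big)^{n/2}e^{\frac{\mu}{4}z^2}$ independently of $\omega$, shows that the constant $(1+a)^{-n}$ and the exponent are sharp and are saturated at $v=0$.

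The delicate point, and the one I expect to be the main obstacle, is exactly this sharpness. A crude Cauchy--Schwarz splitting off a weight $e^{-\alpha|x|^2}$, or Minkowski's integral inequality, reproduces the correct rate $\mu$ but forces either a constant diverging as $\alpha\to a$ or a spurious polynomial factor in $u$, neither of which is permitted by the stated bound. The way I would secure the exact constant is to observe that the angular integral operator arising after the $\omega$-integration has a zonal kernel, a function of $\eta\cdot\eta'$ with nonnegative spherical-harmonic (Gegenbauer) multipliers; hence its $L^2(S^{n-1})$ norm is attained on constant angular profiles, so the extremum of the quadratic form is realised by the radial Gaussian and the extremal problem collapses to the one exact radial Gaussian computation above. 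Once this sharp evaluation is in place the bound is uniform in $\omega$, and integrating over $S^{n-1}$ and reinstating the prefactor $e^{\frac12(v^2-u^2)}$ delivers the proposition.
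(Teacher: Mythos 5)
Your proposal is correct, and once unwound it rests on the same two pillars as the paper's own proof, though organized differently. The paper deduces the second bound from the first via $B\widehat{f}(z,\omega)=(2\pi)^{n/2}Bf(-iz,\omega)$ exactly as you do; for the first bound it passes to polar coordinates, pulls the $L^2(S^{n-1})$ norm inside the radial integral by Minkowski's integral inequality, and bounds the angular operator $T_{sz}f(\omega)=\int_{S^{n-1}}f(s\eta)e^{sz\eta\cdot\omega}\,d\eta$ by the Schur/Young bound for a pointwise positive zonal kernel, producing the factor $\int_{S^{n-1}}e^{su\eta\cdot\omega}\,d\eta=c_n\,J_{\frac{n}{2}-1}(isu)/(isu)^{\frac{n}{2}-1}$; the remaining radial integral $\int_0^\infty e^{-\frac{1+a}{2}s^2}\frac{J_{\frac{n}{2}-1}(isu)}{(isu)^{\frac{n}{2}-1}}\,s^{n-1}\,ds$ is a Hankel transform of a Gaussian and evaluates in closed form to $c\,(1+a)^{-n/2}e^{\frac{u^2}{2(1+a)}}$, which is precisely where the sharp constant and exponent come from. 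Your route instead expands the square of the convolution and integrates in $\omega$ first; your Perron--Frobenius/Gegenbauer observation that a nonnegative zonal kernel has $L^2(S^{n-1})$ operator norm equal to its degree-zero Funk--Hecke eigenvalue, attained at constants, is exactly the sharpness content of the paper's Schur/Young step, so the two arguments are equivalent at the crucial point. What you leave as a sketch does close: for fixed radii $s,s'$ the degree-zero eigenvalue of your kernel factors, since $\int_{S^{n-1}}\int_{S^{n-1}}e^{u(s\eta+s'\eta')\cdot\omega}\,d\omega\,d\eta' = c_n^2\,\frac{J_{\frac{n}{2}-1}(ius)}{(ius)^{\frac{n}{2}-1}}\,\frac{J_{\frac{n}{2}-1}(ius')}{(ius')^{\frac{n}{2}-1}}$ (integrate in $\eta'$ first, noting the result is independent of $\omega$), so your double radial integral splits into the square of the paper's single Bessel--Gaussian integral and the exact evaluation you verified on the test function $e^{-\frac{a}{2}|x|^2}$ finishes it, with no two-variable extremal problem remaining.

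One point worth correcting: your stated worry that Minkowski's integral inequality inevitably produces a divergent constant or a spurious polynomial factor in $u$ is unfounded, and in fact the paper's proof is a counterexample to it. Minkowski here is just the triangle inequality for the $L^2(S^{n-1})$-valued radial integral and loses nothing exponential; the sharpness is preserved because the subsequent radial integral is not estimated by Cauchy--Schwarz against a split Gaussian weight but computed exactly as a Hankel transform of a Gaussian (and at the radial Gaussian extremizer, where $Bf$ is independent of $\omega$, the Minkowski step is an equality pattern). So the ``delicate point'' you flag is real, but both your mechanism and the paper's handle it, and they are the same mechanism in two guises.
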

\begin{proof} For $z \in \mathbb{C}$ and $\omega \in S^{n-1}$,
\Bea Bf(z,\omega) &=& e^{- \frac {1}{4}z^2}
\int_{\mathbb{R}^n}f(x)~
e^{- \frac{1}{2}|x|^2} e^{zx.\omega}~dx\\
&=& e^{- \frac {1}{4}z^2} \int_0^{\infty}
\Big(\int_{S^{n-1}}f(s\eta)
~e^{sz\eta .\omega}~d\eta \Big)~e^{-\frac{1}{2} s^2}s^{n-1} ds\\
\Eea
Thus we get the estimate
$$|Bf(z,\omega)| \leq |e^{- \frac {1}{4}z^2}| \int_0^\infty
|T_{sz}f(\omega)|~e^{- \frac{1}{2}s^2}s^{n-1} ds $$ where $$
T_{sz}f(\omega) =\int_{S^{n-1}}f(s\eta)~e^{sz\eta .\omega}~d\eta
$$ If we write $z=u+iv$, then $$ |T_{sz}f(\omega)| \leq
\int_{S^{n-1}} |f(s\eta)|~e^{su\eta .\omega}~d\eta $$ and
consequently, \Bea \Big(\int_{S^{n-1}}|T_{sz}f(\omega)|^2 d\omega
\Big)^{\frac {1}{2}} & \leq & \Big(\int_{S^{n-1}}
|f(s\eta)|^2~d\eta \Big)^{\frac {1}{2}}
\int_{S^{n-1}}e^{su\eta .\omega}~d \eta \\
& \leq & C~e^{- \frac{a}{2} s^2}~\frac{J_{\frac {n}{2}-1}(isu)}
{(isu)^{\frac{n}{2}-1}}\\
\Eea Notice that $ \Big(\int_{S^{n-1}}|Bf(z,\omega)|^2 d\omega
\Big)^{\frac {1}{2}}$ is bounded by \Bea && e^{-\frac
{1}{4}(u^2-v^2)} \Big( \int_{S^{n-1}}\big(\int_{0}^{\infty}
|T_{sz}f(\omega)|~e^{-\frac{1}{2} s^2}
s^{n-1}ds\big)^2~d\omega \Big)^{\frac{1}{2}} \\
&& \leq e^{- \frac {1}{4}(u^2-v^2)}\int_{0}^{\infty} \Big(
\int_{S^{n-1}}|T_{sz}f(\omega)|^2~d\omega \Big)
^{\frac{1}{2}}~e^{- \frac{1}{2} s^2}s^{n-1}ds \Eea where the last
inequality is achieved using Minkwoski's integral inequality.

Now, using the above estimates, we get \Bea
\Big(\int_{S^{n-1}}|Bf(z,\omega)|^2 d\omega \Big)^{\frac {1}{2}} &
\leq &  C~e^{- \frac {1}{4}(u^2-v^2)}\int_{0}^{\infty} e^{-
\frac{1}{2}(1+a)s^2} \frac{J_{\frac {n}{2}-1}(isu)} {(isu)^{\frac
{n}{2}-1}}s^{n-1}ds \\
& \leq & \frac {C}{(1+a)^{\frac {n}{2}}}~
\exp\Big(-\big(\frac{u^2-v^2}{4}\big) \Big)~\exp\Big(\frac
{u^2}{2(1+a)}\Big) \\
& = & \frac {C}{(1+a)^{\frac {n}{2}}}~\exp \Big(\frac {v^2+\mu
u^2}{4} \Big). \Eea Replacing $f$ by $\widehat{f}$ and using the
fact that $ B\widehat{f}(z,\omega)=
(2\pi)^\frac{n}{2}Bf(-iz,\omega)$, we also  get \Bea
\Big(\int_{S^{n-1}}|Bf(z,\omega)|^2~d\omega \Big)^{\frac {1}{2}}
\leq \frac {C}{(1+a)^{\frac {n}{2}}}~\exp\big(\frac {u^2+\mu
v^2}{4}\big) \Eea Hence the proposition is proved.
\end{proof}

\begin{thm} For a function $f $ on ${\mathbb{R}}^n$ satisfying the
conditions (5.1) and (5.2) let $ Bf(z,\omega) =
\sum_{k=0}^{\infty}d_k(\omega)z^k $ be the Taylor series expansion
of the Bargmann transform. Then \Bea \int_{S^{n-1}}
|d_k(\omega)|^2~d\omega  \leq C 2^{-k}
\frac{k^{-\frac{1}{2}}}{\Gamma(k+1)}~ {\mu}^{\frac {k}{2}}. \Eea
\end{thm}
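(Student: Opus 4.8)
The plan is to transcribe Vemuri's proof of Theorem 1.1 (recalled in Section 3), replacing the scalar modulus $|Bf|$ by the $L^2(S^{n-1})$-norm of the vector-valued Bargmann transform. Accordingly I would set
\[ N(z) = \Big( \int_{S^{n-1}} |Bf(z,\omega)|^2 \, d\omega \Big)^{1/2}, \]
and begin by isolating the one feature that makes the one-variable machinery legitimate here: for each fixed $\omega$ the map $z\mapsto Bf(z,\omega)$ is entire, so $z\mapsto Bf(z,\cdot)$ is a holomorphic function of a single complex variable with values in the Hilbert space $L^2(S^{n-1})$; consequently $\log N$ is subharmonic on $\C$. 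This subharmonicity is exactly what the maximum-principle argument of \cite{V} requires, and it is the only genuinely new input needed to pass from one dimension to the vector-valued setting.

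With this in hand, Proposition 5.1 supplies the two Gaussian bounds
\[ N(z) \le C(1+a)^{-n/2}\exp\Big(\tfrac{v^2+\mu u^2}{4}\Big), \qquad N(z)\le C(1+a)^{-n/2}\exp\Big(\tfrac{u^2+\mu v^2}{4}\Big), \]
which are the verbatim analogues, with $N$ in place of $|Bf|$, of the scalar estimates in Section 3 (here $z=u+iv$ and $\mu=\tfrac{1-a}{1+a}$). The first is small near the real axis, the second near the imaginary axis, and applying the Phragmen-Lindelof argument of \cite{V} to the subharmonic function $\log N$ would upgrade them to the single isotropic estimate
\[ N(z)\le C(1+a)^{-n/2}\exp\Big(\tfrac{\sqrt\mu}{4}|z|^2\Big). \]

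Next I would recover the Taylor coefficients by the vector-valued Cauchy formula: for $|z|=r$,
\[ d_k(\omega)=\frac{r^{-k}}{2\pi}\int_0^{2\pi} Bf(re^{i\theta},\omega)\,e^{-ik\theta}\,d\theta, \]
whence Minkowski's integral inequality gives $\big(\int_{S^{n-1}}|d_k|^2\big)^{1/2}\le \tfrac{r^{-k}}{2\pi}\int_0^{2\pi} N(re^{i\theta})\,d\theta \le C(1+a)^{-n/2}e^{\sqrt\mu r^2/4}r^{-k}$. Minimising $e^{\sqrt\mu r^2/4}r^{-k}$ at $r^2=2k/\sqrt\mu$ and squaring produces the exponential rate $\big(\tfrac{e\sqrt\mu}{2k}\big)^{k}=\big(\tfrac{e}{2k}\big)^{k}\mu^{k/2}$, and rewriting $\big(\tfrac{e}{2k}\big)^{k}$ through Stirling's formula turns this into a bound of exactly the stated shape $2^{-k}\,\Gamma(k+1)^{-1}\mu^{k/2}$, up to a power of $k$.

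That leftover power of $k$ is where the real work lies, and I expect it to be the main obstacle. The crude Cauchy estimate just described carries $k^{+1/2}$ where the statement demands $k^{-1/2}$, so it overshoots the claimed bound for $\int_{S^{n-1}}|d_k|^2$ by a full factor of $k$. This gap is genuinely analytic rather than a wasteful inequality: already for the extremal model $e^{cz^2}$ the modulus bound $|e^{cz^2}|\le e^{c|z|^2}$ is sharp, yet its Taylor coefficients carry an extra decay that no bound on $|Bf|$ alone can detect. To capture it I would follow the refined estimate of \cite{V}, analysing the oscillatory integral $\int_0^{2\pi} Bf(re^{i\theta},\omega)e^{-ik\theta}\,d\theta$ by steepest descent near its stationary directions --- instead of bounding it by $2\pi$ times the supremum of $N$ --- which is what produces the missing $k^{-1/2}$ and hence the factor $2^{-k}\,k^{-1/2}\,\Gamma(k+1)^{-1}\mu^{k/2}$. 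Apart from this one delicate refinement, the entire argument is a routine transcription of the one-dimensional proof, the subharmonicity of $\log N$ being the sole new ingredient.
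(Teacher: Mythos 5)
Your outline is viable, but it takes a genuinely different route from the paper, and the comparison is instructive. The paper avoids all vector-valued complex analysis by dualizing: for every normalised $g \in L^2(S^{n-1})$ it forms the scalar entire function
\[ F_g(z) = \int_{S^{n-1}} Bf(z,\omega)\, g(\omega)\, d\omega, \]
notes (by Cauchy--Schwarz) that $|F_g(z)| \le N(z)$, so that $F_g$ satisfies precisely the two Gaussian bounds of Proposition 5.1, applies Vemuri's one-dimensional argument verbatim to $F_g$ to estimate its $k$-th Taylor coefficient $\int_{S^{n-1}} d_k(\omega) g(\omega)\, d\omega$, and finally takes the supremum over all such $g$ to recover $\big(\int_{S^{n-1}} |d_k(\omega)|^2\, d\omega\big)^{1/2}$. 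This reduction buys two things that your route must supply by hand. First, the subharmonicity of $\log N$ --- your ``only new input'' --- becomes unnecessary, since Phragmen--Lindelof is only ever invoked for the scalar functions $F_g$. Second, and more importantly, the delicate $k^{-1/2}$ refinement, which you correctly identify as the crux (your bookkeeping is right: the crude Cauchy--Minkowski estimate overshoots the stated bound for $\int_{S^{n-1}}|d_k|^2$ by a full factor of $k$), comes for free: it is already contained in Vemuri's scalar conclusion applied to each $F_g$, and it survives because the supremum over $g$ is taken at the level of the coefficient estimates rather than of the function bounds. On your route you must re-run that refined step for $N$ itself; this does transfer, since Minkowski gives $\|d_k\|_{L^2(S^{n-1})} \le \frac{r^{-k}}{2\pi}\int_0^{2\pi} N(re^{i\theta})\, d\theta$ and your subharmonicity observation yields the needed angularly non-uniform majorant for $N$, but one caveat on wording: with only modulus bounds on $Bf$ you cannot literally perform ``steepest descent near stationary directions'' of the oscillatory integral, because the phase of $Bf$ is unknown; the factor $e^{-ik\theta}$ must be discarded, and the missing $k^{-1/2}$ is extracted by Laplace's method applied to the $\theta$-dependent Phragmen--Lindelof majorant, whose maximum in $\theta$ is attained only at isolated nondegenerate directions --- your parenthetical remark (``instead of bounding it by $2\pi$ times the supremum of $N$'') indicates you intend exactly this, so the plan is sound. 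Note finally that both you and the paper ultimately cite \cite{V} as a black box for the refined scalar estimate; the difference is that the paper's duality trick makes that citation directly applicable, whereas your transcription obliges you to verify that each step of \cite{V} tolerates the vector-valued setting.
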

\begin{proof} The proof of this theorem can be reduced to the scalar valued
case treated in \cite{V}. Indeed, for any normalised
$ g \in L^2(S^{n-1}) $ the scalar valued function
$$ F_g(z) = \int_{S^{n-1}} Bf(z,\omega)g(\omega) d\omega $$  is an entire
function satisfying estimates stated in Proposition 5.1. The
arguments in \cite{V}                                                                      lead to estimates for the integral
$$ \int_{S^{n-1}} d_k(\omega) g(\omega) d\omega.$$ Taking supremum over all
such $ g $ we get the required estimates.
\end{proof}

In order to apply the above estimates to prove Theorem we need the
following result which shows that the $ L^2(S^{n-1}) $ norms of $
d_k(\omega) $ can be expressed in terms of Laguerre coefficients
of the spherical harmonic components of $ f $ restricted to the
unit sphere.

\begin{thm} For $f \in L^2(\mathbb{R}^n),~if~ Bf(z,\omega)=
\sum_{k=0}^{\infty}d_{_k}(\omega)z^k$ is the Taylor series
expansion, then for all $ k \geq 1 $,
$$ \int_{S^{n-1}}|d_{2k}(\omega)|^2 d\omega ~=~ 2^{-n-4k+2}
\sum_{m=0}^k \sum_{j=1}^{d_{2m}} \frac {|(\widetilde{f}_{2m,j}~,
~\psi_{k-m}^{\frac{n}{2}+2m-1} )|^2}{\big(\Gamma(\frac{n}{2}+k+m)
\big)^2},$$
$$ \int_{S^{n-1}}|d_{2k+1}(\omega)|^2 d\omega ~=~
2^{-n-4k} \sum_{m=0}^k \sum_{j=1}^{d_{2m+1}} \frac
{|(\widetilde{f}_{2m+1,j}~,~\psi_{k-m}^{\frac{n}{2}+2m})|^2}
{\big(\Gamma(\frac{n}{2}+k+m+1)\big)^2}.$$
\end{thm}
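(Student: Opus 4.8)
The plan is to compute the Taylor coefficients $d_k(\omega)$ explicitly by inserting the spherical harmonic expansion $f(s\eta)=\sum_{m,j}f_{mj}(s)Y_{mj}(\eta)$ into the definition of the vector valued Bargmann transform and then recognising that the resulting radial transform is nothing but the transform $U_\delta$ introduced in Section 4. Writing $Bf(z,\omega)$ in polar coordinates, the inner spherical integral is $\int_{S^{n-1}}f(s\eta)e^{zs\eta\cdot\omega}\,d\eta$, so everything hinges on evaluating $\int_{S^{n-1}}Y_{mj}(\eta)e^{zs\eta\cdot\omega}\,d\eta$ for each spherical harmonic.

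First I would invoke the Funk--Hecke theorem (equivalently, the Gegenbauer/plane wave expansion analytically continued in its argument) to obtain
$$\int_{S^{n-1}}Y_{mj}(\eta)\,e^{zs\eta\cdot\omega}\,d\eta=(2\pi)^{n/2}\,\frac{I_{m+\frac n2-1}(zs)}{(zs)^{\frac n2-1}}\,Y_{mj}(\omega),$$
where $I_\nu$ is the modified Bessel function; the case $m=0$ is the identity underlying the estimate in the proof of Proposition 5.1. Setting $\delta=m+\frac n2-1$ and pulling out a factor $(zs)^m$ turns the Bessel quotient into $(zs)^m\,(zs)^{-\delta}I_\delta(zs)$. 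Using $f_{mj}(s)=s^m\widetilde f_{mj}(s)$, the factor $s^m$ combines with $s^{n-1}$ to give the weight $s^{2\delta+1}$, while the Gaussian factors reorganise into $e^{-z^2/4}e^{-s^2/2}$, which is exactly the kernel of $U_\delta$. Hence
$$Bf(z,\omega)=(2\pi)^{n/2}\sum_{m=0}^\infty\sum_{j=1}^{d_m}z^m\,Y_{mj}(\omega)\,U_{m+\frac n2-1}\big[\widetilde f_{mj}\big](z).$$

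Next I would substitute the Taylor expansion of $U_\delta$ established in Section 4, namely $U_\delta g(w)=2^{-\delta}\sum_k \frac{(-1)^k 2^{-2k}}{\Gamma(k+\delta+1)}(g,\psi_k^\delta)\,w^{2k}$, so that each summand contributes the single power $z^{m+2k}$. Collecting the coefficient of $z^N$ then forces $m+2k=N$: for $N=2k$ only the even spherical harmonics $Y_{2m,j}$ survive (with the $U$-index equal to $k-m$) and for $N=2k+1$ only the odd ones, which is consistent with the parity $d_k(-\omega)=(-1)^k d_k(\omega)$ coming from $Bf(z,-\omega)=Bf(-z,\omega)$. A short bookkeeping of the powers of $2$, which combine into an exponent independent of $m$, together with the Gamma factors $\Gamma(\tfrac n2+k+m)$ and $\Gamma(\tfrac n2+k+m+1)$, then reproduces the two displayed expressions after I take the $L^2(S^{n-1})$ norm and use orthonormality of $\{Y_{mj}\}$.

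The main obstacle is the second step: establishing the Funk--Hecke identity with the correct Bessel index $m+\frac n2-1$ and, above all, matching every constant (in particular the Funk--Hecke constant and the normalisation implicit in the definition of $B$ and of the surface measure) so that the radial integral is \emph{literally} $U_\delta[\widetilde f_{mj}]$ rather than merely proportional to it. One must also justify interchanging the sum over $(m,j)$ with the radial integration and with the extraction of Taylor coefficients; this is where the hypotheses (5.1) and (5.2) enter, since the Gaussian bounds guarantee absolute convergence of the spherical harmonic expansion in the relevant weighted $L^2$ spaces and so legitimise the term-by-term manipulations.
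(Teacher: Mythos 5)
Your proposal is essentially the paper's own proof: both expand the kernel $e^{sz\eta\cdot\omega}$ in spherical harmonics via the Funk--Hecke/plane-wave identity with Bessel index $\frac{n}{2}+m-1$, identify the radial integral as $U_{\frac{n}{2}+m-1}\big[\widetilde f_{mj}\big](z)$ times $z^m$ (up to a unimodular factor), insert the Taylor expansion of $U_\delta$ from Section 4 to get the coefficients $b_{k,mj}$, match parity and powers of $z$ so that $d_{2k}$ picks up only even harmonics with $U$-index $k-m$, and finish with Parseval on $S^{n-1}$. The only differences are cosmetic --- you use $I_\nu$ at real argument where the paper writes $J_\nu(-isz)$, and you match powers directly where the paper invokes the Cauchy integral $d_k(\omega)=\frac{1}{2\pi i}\int_\gamma \frac{Bf(z,\omega)}{z^{k+1}}\,dz$ --- and the constant-matching you rightly flag is exactly the bookkeeping the paper carries out (note the paper states the plane-wave identity without your $(2\pi)^{n/2}$ factor, so your constants must be reconciled with its normalisation).
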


\begin{proof} We know that for every $z_1,z_2 \in \mathbb{C}$
$$
e^{iz_{_1}z_{_2}\eta.\omega} = \sum_{m=0}^\infty \sum_{j=1}^{d_m}
\frac {J_{\frac{n}{2}+m-1}(z_1z_2)}
{(z_1z_2)^{\frac{n}{2}-1}}Y_{mj}(\eta)~ Y_{mj}(\omega)$$ which in
particular implies that
$$ e^{sz\eta.\omega} = \sum_{m=0}^\infty \sum_{j=1}^{d_m}
\frac{J_{\frac{n}{2}+m-1}(-isz)}{(-isz)^{\frac{n}{2}-1}}
Y_{mj}(\eta)~Y_{mj}(\omega)$$ and  thus $$
\int_{S^{n-1}}f(s\eta)~e^{sz\eta.\omega}~d\eta = \sum_{m=0}^\infty
\sum_{j=1}^{d_m} f_{mj}(s) \frac {J_{\frac{n}{2}+m-1}(-isz)}
{(-isz)^{\frac{n}{2}-1}}~Y_{mj}(\omega) .$$ Now, \Bea Bf(z,\omega)
&=& e^{-\frac{1}{4}z^{^2}} \int_{\mathbb{R}^n}f(x)~
e^{-\frac{1}{2}|x|^2}e^{zx.\omega}~dx\\
&=& e^{-\frac{1}{4}z^2} \int_0^\infty \int_{S^{n-1}} f(s\eta)
e^{-\frac{1}{2}s^2}e^{sz\eta.\omega}s^{n-1}ds~d\eta \\
&=& e^{-\frac{1}{4}z^2} \sum_{m=0}^\infty \sum_{j=1}^{d_m}
\Big(\int_0^\infty f_{mj}(s)~ \frac {J_{\frac{n}{2}+m-1}(-isz)}
{(-isz)^{\frac{n}{2}-1}}~e^{-\frac{1}{2}s^2}s^{n-1}ds \Big)
Y_{mj}(\omega)\\
&=& \sum_{m=0}^\infty \sum_{j=1}^{d_m} (-iz)^m~(U_m
\widetilde{f}_{mj})(-z)~Y_{mj}(\omega)\\
&=& \sum_{m=0}^\infty \sum_{j=1}^{d_m} (-iz)^m~(U_m
\widetilde{f}_{mj})(z)~Y_{mj}(\omega) \Eea where for simplicity we
have written $ U_m $ in place of  $ U _{\frac{n}{2}+m-1}.$

If we write the power series expansion of $U_m\widetilde{f}_{mj}$
as \Bea (U_{_m}\widetilde{f}_{mj})(z) ~=~
\sum_{k=0}^{\infty}b_{k,mj}~z^{2k} \Eea then, as we saw in Section
4, \Bea b_{k,mj}=2^{-(\frac{n}{2}+m-1)}\frac{(-1)^k2^{-2k}}
{\Gamma(\frac{n}{2}+k+m)}(\widetilde{f}_{mj}~,
~\psi_k^{\frac{n}{2}+m-1}) \Eea Now,  $d_k(\omega) = \frac{1}{2\pi
i} \int_\gamma \frac{Bf(z,\omega)}{z^{k+1}}dz~$ implies that \Bea
d_{2k}(\omega)= (-1)^{\frac{n}{2}-1} \sum_{m=0}^k
\sum_{j=1}^{d_{2m}} (-i)^m
\big(b_{k-m;~2m,j}\big)~Y_{2m,j}(\omega) \Eea which implies, \Bea
\int_{S^{n-1}}|d_{2k}(\omega)|^2 d\omega = \sum_{m=0}^k
\sum_{j=1}^{d_{2m}}|b_{k-m;~2m,j}|^2 \Eea and  therefore \Bea
\int_{S^{n-1}}|d_{2k}(\omega)|^2 d\omega = 2^{-n-4k+2}
\sum_{m=0}^k \sum_{j=1}^{d_{2m}} \frac
{|(\widetilde{f}_{2m,j}~,~\psi_{k-m}^{\frac{n}{2}+2m-1}
)|^2}{\big(\Gamma(\frac{n}{2}+k+m) \big)^2} \Eea A similar
calculation shows that \Bea \int_{S^{n-1}}|d_{2k+1}(\omega)|^2
d\omega = 2^{-n-4k} \sum_{m=0}^k \sum_{j=1}^{d_{2m+1}} \frac
{|(\widetilde{f}_{2m+1,j}~,~\psi_{k-m}^{\frac{n}{2}+2m}
)|^2}{\big(\Gamma(\frac{n}{2}+k+m+1) \big)^2} \Eea which completes
the proof of the theorem.
\end{proof}

Combining Theorems 5.2 and 5.3 we can prove Theorem 1.4. To see this, we
first observe that the Hecke-Bochner formula for the Hermite projections lead
to
\begin{prop}
$$ \|P_{2k}f\|_2^2 = 2 \sum_{m=0}^{k} \sum_{j=1}^{d_{2m}}
\Big(\Gamma(k-m+1)~ \Gamma(\frac{n}{2}+k+m)\Big)
\frac{|(\widetilde{f}_{2m,j}~,~\psi_{k-m}^{\frac{n}{2}+2m-1} )|^2}
{\big(\Gamma (\frac{n}{2}+k+m) \big)^2}.$$ A similar expression
holds for $ P_{2k+1}f $ also.
\end{prop}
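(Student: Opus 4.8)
The plan is to start from the Hecke--Bochner formula for the Hermite projections recorded in Section 2, namely
$$ P_{2k}f(x) = \sum_{m=0}^k \sum_{j=1}^{d_{2m}}
R_{k-m}^{\frac{n}{2}+2m-1}(\widetilde{f}_{2m,j})~
\psi_{k-m}^{\frac{n}{2}+2m-1}(r)~r^{2m}Y_{2m,j}(\omega), $$
and to compute $ \|P_{2k}f\|_2^2 $ directly in polar coordinates $ x = r\omega $, $ dx = r^{n-1}\,dr\,d\omega $. Since the spherical harmonics $ \{Y_{2m,j}\} $ are orthonormal on $ S^{n-1} $, integrating over $ S^{n-1} $ annihilates every cross term and leaves a single sum of squares,
$$ \|P_{2k}f\|_2^2 = \sum_{m=0}^k \sum_{j=1}^{d_{2m}}
\big| R_{k-m}^{\frac{n}{2}+2m-1}(\widetilde{f}_{2m,j}) \big|^2
\int_0^\infty \big| \psi_{k-m}^{\frac{n}{2}+2m-1}(r) \big|^2 r^{4m+n-1}\,dr. $$

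The key observation is that with $ \delta = \frac{n}{2}+2m-1 $ one has $ 4m+n-1 = 2\delta+1 $, so the radial integral is exactly the squared $ L^2(\R_+, r^{2\delta+1}dr) $ norm of $ \psi_{k-m}^\delta $. To evaluate this norm I would invoke the normalisation established in Section 4: the identity $ U_\delta\big((2\Gamma(k+1)/\Gamma(k+\delta+1))^{1/2}\psi_k^\delta\big) = \zeta_k $, together with the fact that $ U_\delta $ is unitary and $ \{\zeta_k\} $ is an orthonormal basis, forces $ \|\psi_k^\delta\|_2^2 = \Gamma(k+\delta+1)/(2\Gamma(k+1)) $. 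Substituting $ \delta = \frac{n}{2}+2m-1 $ and replacing $ k $ by $ k-m $ gives $ (k-m)+\delta+1 = \frac{n}{2}+k+m $, so the radial integral equals $ \Gamma(\frac{n}{2}+k+m)/(2\Gamma(k-m+1)) $.

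It then remains to unwind $ R_{k-m}^\delta $. By its definition, $ R_{k-m}^{\frac{n}{2}+2m-1}(\widetilde{f}_{2m,j}) = 2\frac{\Gamma(k-m+1)}{\Gamma(\frac{n}{2}+k+m)}(\widetilde{f}_{2m,j},\psi_{k-m}^{\frac{n}{2}+2m-1}) $, so its modulus squared contributes the factor $ 4(\Gamma(k-m+1))^2/(\Gamma(\frac{n}{2}+k+m))^2 $. Multiplying by the radial integral computed above, the powers of $ \Gamma(\frac{n}{2}+k+m) $ and $ \Gamma(k-m+1) $ collapse to leave precisely $ 2\Gamma(k-m+1)\Gamma(\frac{n}{2}+k+m)|(\widetilde{f}_{2m,j},\psi_{k-m}^{\frac{n}{2}+2m-1})|^2/(\Gamma(\frac{n}{2}+k+m))^2 $, which is the asserted identity. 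The companion formula for $ P_{2k+1}f $ follows by the same computation applied to the odd part of the Hecke--Bochner expansion, now with $ \delta = \frac{n}{2}+2m $. The only place demanding care is the bookkeeping of the Gamma factors: one must correctly match the index $ \delta = \frac{n}{2}+2m-1 $ to the exponent $ r^{4m+n-1} $ and track the shift $ k \mapsto k-m $ consistently, so that the normalisation constant of $ \psi_{k-m}^\delta $ cancels against the square of $ R_{k-m}^\delta $. This is entirely mechanical once the norm of $ \psi_k^\delta $ is in hand.
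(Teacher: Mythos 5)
Your computation is correct and is exactly the argument the paper intends: the paper states this proposition as a direct consequence of the Hecke--Bochner formula for $P_{2k}f$ without writing out the details, and your polar-coordinate expansion, the orthonormality of the $Y_{2m,j}$, and the norm $\|\psi_{k}^{\delta}\|^2 = \Gamma(k+\delta+1)/(2\Gamma(k+1))$ (which you legitimately extract from the normalisation $U_\delta\big((2\Gamma(k+1)/\Gamma(k+\delta+1))^{1/2}\psi_k^\delta\big)=\zeta_k$, or equivalently from the classical Laguerre orthogonality relations already implicit in the definition of $R_k^\delta$) supply precisely those details. The Gamma-factor bookkeeping checks out, including the matching $4m+n-1 = 2\delta+1$ for $\delta=\frac{n}{2}+2m-1$ and the odd case with $\delta=\frac{n}{2}+2m$.
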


We note the similarity between the expression for $
\|P_{2k}f\|_2^2 $ and the $ L^2(S^{n-1}) $ norms of $
d_{2k}(\omega).$ We therefore, rewrite expression for $
\|P_{2k}f\|_2^2 $ as
$$ 2 \sum_{m=0}^{k} 2^{2m} \sum_{j=1}^{d_{2m}} c(k,m)
\Big(2^{-2k}\Gamma(2k+1)~\frac{|(\widetilde{f}_{2m,j},
\psi_{k-m}^{\frac{n}{2}+2m-1})|^2}{\big(\Gamma
\big(\frac{n}{2}+k+m) \big)^2} \Big).$$ where
$$ c(k,m) = \Big(2^{2(k-m)}\frac{\Gamma(k-m+1)~\Gamma(\frac{n}{2}+k+m)}
{\Gamma(2k+1)}\Big).$$ Stirling's formula for gamma functions show
that $ c(k,k) = O(k^{n/2-1}).$ In general, we have

\begin{lem} For any $ 0 \leq m \leq k $ we have $ c(k,m) = O(k^{(n-1)/2}).$
\end{lem}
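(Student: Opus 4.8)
The plan is to reduce the estimate, by Stirling's formula, to a uniform bound on a single central binomial ratio. First I would make the substitution $p = k-m$, so that $k+m = 2k-p$ and $p$ runs over the integers $0 \le p \le k$; the quantity to be bounded becomes
$$c(k,m) = 2^{2p}\,\frac{\Gamma(p+1)\,\Gamma(2k-p+\tfrac{n}{2})}{\Gamma(2k+1)}.$$
The idea is then to peel off the only factor carrying the dimensional exponent. Writing $\Gamma(2k-p+\tfrac{n}{2}) = \Gamma(2k-p+1)\cdot \tfrac{\Gamma(2k-p+n/2)}{\Gamma(2k-p+1)}$ and invoking the standard asymptotic $\Gamma(x+\alpha)/\Gamma(x+\beta) \sim x^{\alpha-\beta}$, the trailing ratio is comparable to $(2k-p)^{n/2-1}$. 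Since $p \le k$ forces $k \le 2k-p \le 2k$, this is comparable to $k^{n/2-1}$ uniformly in $p$ (whatever the sign of $n/2-1$). Hence, with a constant independent of $m$,
$$c(k,m) \le C\,k^{\frac{n}{2} - 1}\,\frac{4^p}{\binom{2k}{p}},$$
and the problem reduces to showing $\max_{0\le p\le k} 4^p/\binom{2k}{p} = O(\sqrt{k})$.

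For this last maximization I would exploit convexity. Setting $t_p = 4^p/\binom{2k}{p}$, the consecutive ratio is $t_{p+1}/t_p = 4(p+1)/(2k-p)$, which is strictly increasing in $p$; equivalently $\log t_p$ is convex in the integer variable $p$. A convex sequence on $\{0,1,\dots,k\}$ attains its maximum at one of the two endpoints, so $\max_{0\le p\le k} t_p = \max\{t_0,\,t_k\}$. Here $t_0 = 1$, while $t_k = 4^k/\binom{2k}{k}$ and Stirling's formula gives $\binom{2k}{k} \sim 4^k/\sqrt{\pi k}$, whence $t_k \sim \sqrt{\pi k}$. Therefore $\max_p t_p = O(\sqrt{k})$, and combining with the previous paragraph yields $c(k,m) = O(k^{(n-1)/2})$ uniformly in $m$. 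As a sanity check, the endpoint $p=0$ (that is, $m=k$) recovers the already noted bound $c(k,k)=O(k^{n/2-1})$, and the dominant contribution comes from $p=k$ (that is, $m=0$).

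The step I expect to be the main obstacle is securing uniformity in $m$ rather than the pointwise estimate. The asymptotic $\Gamma(2k-p+\tfrac{n}{2})/\Gamma(2k-p+1) \sim (2k-p)^{n/2-1}$ must be used with an error that is controlled simultaneously for all $p \in \{0,\dots,k\}$; this is legitimate precisely because the argument $2k-p$ never drops below $k$, so one stays in the regime where Stirling's estimate is uniformly valid, but it is the point that needs the most care. The convexity argument then disposes of the worst case over $p$ in one stroke, so that no separate treatment of the interior values of $m$ is required.
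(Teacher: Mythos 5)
Your proof is correct, and it takes a genuinely different route from the paper's. After the substitution $p=k-m$ you peel off $\Gamma(2k-p+\tfrac n2)/\Gamma(2k-p+1)\asymp k^{n/2-1}$ (uniformly, since $k\le 2k-p\le 2k$ keeps the Gamma-ratio asymptotic in its uniform regime), reducing the lemma to $\max_{0\le p\le k}4^p/\binom{2k}{p}=O(\sqrt k)$, which you settle structurally: $t_p=4^p/\binom{2k}{p}$ has increasing consecutive ratios $t_{p+1}/t_p=4(p+1)/(2k-p)$, hence $\log t_p$ is convex and the maximum sits at an endpoint, with $t_0=1$ and $t_k=4^k/\binom{2k}{k}\sim\sqrt{\pi k}$. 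The paper instead applies Stirling's formula to all three Gamma factors and runs a case analysis: it first checks that $2^{-2m}\bigl\{\frac{1+\frac{m+n/2-1}{k}}{1-\frac mk}\bigr\}^m\le 1$ exactly when $m\le\frac15\bigl(3k-\frac n2+1\bigr)$, which covers $0\le m\le[\frac{k+1}2]$ for large $k$, and then handles $[\frac{k+1}2]<m<k$ by regrouping so that $\bigl(1-\frac mk\bigr)^{k-m}\le\bigl(\frac12\bigr)^{k-m}$ absorbs the remaining $2^{-2m}\cdot2^{k+m}$ (the paper's parenthetical ``$\bigl(1-\frac mk\bigr)<-\frac12$'' is evidently a slip for $1-\frac mk<\frac12$). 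Your argument buys several things: the log-convexity disposes of all interior values of $m$ in one stroke, so no case split or threshold like $\frac15(3k-\frac n2+1)$ is needed; the only analytic input beyond the Gamma-ratio bound is the central binomial asymptotic, i.e.\ one instance of Stirling rather than several; and it exhibits the extremal point $p=k$ (that is, $m=0$), showing the exponent $(n-1)/2$ is attained and hence that the lemma is sharp --- a fact invisible in the paper's term-by-term estimates. The paper's approach, by contrast, is self-contained hand computation with Stirling and elementary inequalities, but at the cost of the two-regime bookkeeping your convexity observation eliminates.
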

\begin{proof} We show that for all $ k \geq 1$ and
 $0\leq m \leq k$,
$$ 2^{2(k-m)}\frac{\Gamma(k-m+1)~\Gamma(k+m+\frac{n}{2})}{\Gamma(2k+1)}
~=~ O(k^{\frac{n}{2}-\frac{1}{2}}).$$  For this, fix $k \in \N $
and consider $ c(k,m).$ When  $m=k$, $ c(k,k)=
\frac{\Gamma(2k+\frac{n}{2})}{\Gamma(2k+1)} $ and by Stirling's
formula, for large $ k $ it behaves like
$$ \frac{(2k+\frac{n}{2}-1)^{2k+\frac{n}{2}-\frac{1}{2}}~
e^{-(2k+\frac{n}{2}-1)}} {(2k)^{2k+\frac{1}{2}}e^{-2k}} =
 \frac{(1+\frac{\frac{n}{2}-1}{2k})^{2k}(2k+\frac{n}{2}-1)^
{\frac{n}{2}-\frac{1}{2}}~e^{-(\frac{n}{2}-1)}}{{(2k)^{\frac{1}{2}}}}.$$
As $(1+\frac{\frac{n}{2}-1}{2k})^{2k}~\leq~e^{\frac{n}{2}-1},
c(k,k)~=~O\big(k^{\frac{n}{2}-1}\big).$

Now for $ 0~\leq~m~<~k$, consider \Bea c(k,m)&=&
2^{2(k-m)}\frac{\Gamma(k-m+1)~\Gamma(k+m+\frac{n}{2})}
{\Gamma(2k+1)}\\
&\sim& 2^{2(k-m)}~ \frac{(k-m)^{k-m+\frac{1}{2}}~e^{-(k-m)}
(k+m+\frac{n}{2}-1)^{k+m+\frac{n}{2}-\frac{1}{2}}~
e^{-(k+m+\frac{n}{2}-1)}} {(2k)^{2k+\frac{1}{2}}e^{-2k}}\\
&=& \frac{e^{-(\frac{n}{2}-1)}}{\sqrt{2}}~ \Big( \frac{2^{-2m}
(k-m)^{k-m+\frac{1}{2}}(k+\frac{n}{2}+m-1)^
{k+\frac{n}{2}+m-\frac{1}{2}}} {k^{2k+\frac{1}{2}}} \Big) \\
&\leq& C_n~e^{-(\frac{n}{2}-1)}2^{-2m}\Big(1-\frac{m}{k}\Big)^k~
\Big(1+\frac{m+\frac{n}{2}-1}{k}\Big)^k \Big \{
\frac{1+\frac{m+\frac{n}{2}-1}{k}}{1-\frac{m}{k}}\Big \}^m ~
k^{\frac{n}{2}-\frac{1}{2}}\\
&\leq& C_n~e^{-(\frac{n}{2}-1)}2^{-2m}~e^{-m}
e{^{m+\frac{n}{2}-1}} \Big\{
\frac{1+\frac{m+\frac{n}{2}-1}{k}}{1-\frac{m}{k}}
\Big \}^m ~k^{\frac{n}{2}-\frac{1}{2}}\\
&=& C_n~2^{-2m}\Big\{
\frac{1+\frac{m+\frac{n}{2}-1}{k}}{1-\frac{m}{k}}\Big \}^m
~k^{\frac{n}{2}-\frac{1}{2}} \Eea But, $ 2^{-2m}\Big\{
\frac{1+\frac{m+\frac{n}{2}-1}{k}}{1-\frac{m}{k}}\Big \}^m~\leq~1
$ if and only if $ 1+\frac{m+\frac{n}{2}-1}{k}~\leq~
4(1-\frac{m}{k}) $ which happens precisely when $
 m~\leq~\frac{1}{5}\big(3k-\frac{n}{2}+1\big) .$ Since for sufficiently large
$k~,~\frac{1}{5}\big(3k-\frac{n}{2}+1\big) ~\geq~
[\frac{k+1}{2}],$ it follows that for
$0~\leq~m~\leq~[\frac{k+1}{2}], c(k,m)
\leq~C_n~k^{\frac{n}{2}-\frac{1}{2}}.$ Now consider
$[\frac{k+1}{2}]~<m~<k .$ In this case \Bea c(k,m) &\leq&
C_n~e^{-(\frac{n}{2}-1)}2^{-2m}
\Big(1-\frac{m}{k}\Big)^{k-m}~\Big(\frac{k+m+\frac{n}{2}-1}{k}\Big)^{k+m}~
k^{\frac{n}{2}-\frac{1}{2}}\\
&\leq& C_n~e^{-(\frac{n}{2}-1)} 2^{-2m}
\Big(1-\frac{m}{k}\Big)^{k-m}~
\Big(\frac{k+m}{k}\Big)^{k+m}~\Big(1+ \frac{\frac{n}{2}-1}{k+m}
\Big)^{k+m}~k^{\frac{n}{2}-\frac{1}{2}}\\
&=& C_n~e^{-(\frac{n}{2}-1)} 2^{-2m} \Big(1-\frac{m}{k}
\Big)^{k-m}~\Big(1+\frac{m}{k}\Big)^{k+m}~\Big(1+
\frac{\frac{n}{2}-1}{k+m}
\Big)^{k+m}~k^{\frac{n}{2}-\frac{1}{2}}\\
&\leq& C_n~e^{-(\frac{n}{2}-1)} 2^{-2m}
\Big(\frac{1}{2}\Big)^{k-m}~
2^{k+m}~e^{\frac{n}{2}-1}~k^{\frac{n}{2}-\frac{1}{2}}\\
&=& C_n~k^{\frac{n}{2}-\frac{1}{2}}\Eea In the second last step,
we use the fact that $ m~>~[\frac{k+1}{2}] $ implies that $
\big(1-\frac{m}{k}\big) < -\frac{1}{2}.$
\end{proof}

The extra factor of $ 2^{2m} $ in the expression for $ \|P_{2k}f\|_2^2 $
suggests that we consider the operator $ T $ defined by
$$ Tf(r\omega) = \sum_{m=0}^\infty 2^{-\frac{m}{2}}(\sum_{j=1}^{d_m}
f_{mj}(r)~Y_{mj}(\omega)).$$ It is then clear that when $ f $ satisfies the
Hardy conditions  $ Tf $ satisfies
$$ \int_{L^2(S^{n-1})} |Tf(r\omega)|^2 d\omega \leq C e^{-\frac{1}{2}ar^2}.$$
A similar estimate is true for $ \widehat{Tf} $ as well.

\begin{thm} Suppose $ f $ satisfies the Hardy conditions with $ a = \tanh(2t).$
Then for any $ k = 0,1,2,... $ we have
$$ \|P_k(Tf)\|_2 \leq C (2k+n)^{\frac{n-2}{4}} e^{-(2k+n)t/2}.$$
\end{thm}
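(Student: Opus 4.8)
The plan is to exploit the parallel, flagged in the paragraph preceding the statement, between the expression for $\|P_{2k}f\|_2^2$ in Proposition 5.4 and the $L^2(S^{n-1})$ norm of the Taylor coefficient $d_{2k}$ in Theorem 5.3: the two sums have identical summands $|(\widetilde{f}_{2m,j},\psi_{k-m}^{\frac{n}{2}+2m-1})|^2/\big(\Gamma(\tfrac{n}{2}+k+m)\big)^2$, except that the first carries an extra weight $2^{2m}c(k,m)$. The operator $T$ is designed precisely to kill the harmful factor $2^{2m}$. Since $T$ multiplies the degree-$m$ spherical harmonic component by $2^{-m/2}$, for a degree-$2m$ component it scales the radial profile by $2^{-m}$, so $(\widetilde{Tf})_{2m,j}=2^{-m}\widetilde{f}_{2m,j}$ and hence each Laguerre coefficient, and thus each summand, is multiplied by $2^{-2m}$. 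First I would record this scaling and substitute it into Proposition 5.4.

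With this substitution the factor $2^{2m}$ is cancelled and, after re-expressing $\Gamma(k-m+1)/\Gamma(\tfrac{n}{2}+k+m)$ through the quantity $c(k,m)=2^{2(k-m)}\Gamma(k-m+1)\Gamma(\tfrac{n}{2}+k+m)/\Gamma(2k+1)$, one is left with
\[
\|P_{2k}(Tf)\|_2^2 = 2^{1-2k}\,\Gamma(2k+1)\sum_{m=0}^k\sum_{j=1}^{d_{2m}} c(k,m)\,\frac{|(\widetilde{f}_{2m,j},\psi_{k-m}^{\frac{n}{2}+2m-1})|^2}{\big(\Gamma(\tfrac{n}{2}+k+m)\big)^2}.
\]
Now I would invoke Lemma 5.5 to bound $c(k,m)\leq C k^{(n-1)/2}$ uniformly in $0\leq m\leq k$, pull this out of the double sum, and recognise the remaining sum as exactly $2^{n+4k-2}\int_{S^{n-1}}|d_{2k}(\omega)|^2\,d\omega$ by Theorem 5.3 applied to $f$ itself. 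Feeding in the estimate $\int_{S^{n-1}}|d_{2k}(\omega)|^2\,d\omega\leq C\,2^{-2k}(2k)^{-1/2}\mu^{k}/\Gamma(2k+1)$ from Theorem 5.2, all the factors $\Gamma(2k+1)$ and $2^{\pm 2k}$ cancel and the bound collapses to $\|P_{2k}(Tf)\|_2^2\leq C\,k^{(n-2)/2}\mu^{k}$. Using $\mu=\tfrac{1-a}{1+a}=e^{-4t}$ when $a=\tanh(2t)$, this reads $C\,k^{(n-2)/2}e^{-4tk}$, so $\|P_{2k}(Tf)\|_2\leq C\,k^{(n-2)/4}e^{-2tk}$.

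Finally I would rewrite this in the stated form: for the even index $2k$ the claimed bound is $(4k+n)^{(n-2)/4}e^{-(4k+n)t/2}=(4k+n)^{(n-2)/4}e^{-2tk}e^{-nt/2}$, which agrees with $k^{(n-2)/4}e^{-2tk}$ after absorbing the constant $e^{-nt/2}$ and comparing $k^{(n-2)/4}$ with $(4k+n)^{(n-2)/4}$ (valid for $k\geq 1$ whatever the sign of $n-2$, since $k\leq 4k+n\leq(4+n)k$). The odd index $2k+1$ is handled identically using the $P_{2k+1}$ companion of Proposition 5.4 together with the second formula of Theorem 5.3, and the finitely many small-$k$ cases are trivial. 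The one genuinely substantial ingredient is the uniform-in-$m$ bound $c(k,m)=O(k^{(n-1)/2})$: without it the weights $c(k,m)$ could not be taken outside the sum, and it is exactly this exponent $(n-1)/2$ that, after halving, produces the power $(n-2)/4$ in the conclusion. That bound is already furnished by Lemma 5.5, so the remaining work is only the bookkeeping of gamma factors and powers of $2$ sketched above.
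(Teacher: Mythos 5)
Your proposal is correct and follows exactly the route the paper intends: the paper's own proof of this theorem is only the one-line remark that it ``follows by using the estimates obtained in Theorem 5.2 along with the above lemma,'' and your computation --- substituting $(\widetilde{Tf})_{2m,j}=2^{-m}\widetilde{f}_{2m,j}$ into Proposition 5.4 to cancel the $2^{2m}$, pulling out $c(k,m)=O(k^{(n-1)/2})$ via Lemma 5.5, identifying the remaining sum with $2^{n+4k-2}\int_{S^{n-1}}|d_{2k}(\omega)|^2\,d\omega$ via Theorem 5.3, and closing with Theorem 5.2 --- is precisely the bookkeeping the authors leave to the reader, carried out correctly (including the conversion $\mu=e^{-4t}$ and the comparison of $k^{(n-2)/4}$ with $(4k+n)^{(n-2)/4}$). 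Your handling of the odd indices by the $P_{2k+1}$ analogues matches the paper's implicit treatment as well.
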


The theorem follows by using the estimates obtained in Theorem 5.2
along with the above lemma. Theorem 1.4 follows as a corollary to
Theorem 5.6 since for such functions $ \|P_kf\|_2 \leq C
\|P_k(Tf)\|_2 $, where $ C $ depends on the number of spherical
harmonic coefficients present in the expansion of $ f .$

\end{document}